\newtheorem{theorem}{Theorem}[section]
\newtheorem{lemma}[theorem]{Lemma}
\newtheorem{corollary}[theorem]{Corollary} 
\newtheorem{conjecture}[theorem]{Conjecture} 
\newtheorem*{thmf}{Theorem \ref{thm:hassettcycle}}
\newtheorem*{lm2}{Lemma \ref{lem:expflow}}
\newtheorem*{thm3}{Theorem \ref{thm:wittentodiag}}
\newtheorem*{thm4}{Theorem \ref{thm:wittentohassettcyc}}
\theoremstyle{definition}
\newtheorem{definition}[theorem]{Definition}
\newtheorem{example}[theorem]{Example}
\newtheorem{remark}[theorem]{Remark}
\newcommand{\vance}[1]{{\color{blue} \sf  Vance: [#1]}}
\newcommand{\renzo}[1]{{\color{red} \sf  Renzo: [#1]}}
\renewcommand{\hat}[1]{\widehat{#1}}
\renewcommand{\tilde}[1]{\widetilde{#1}}
\newcommand{\emphbf}[1]{{\bf  #1}}
\newcommand{\DD}{\displaystyle}
\newcommand{\vardel}{\partial}
\newcommand{\Moduli}{\overline{\mathcal{M}}}
\newcommand{\moduli}{\mathcal{M}}
\newcommand{\A}{\mathcal{A}}
\newcommand{\calP}{\mathcal{P}}
\newcommand{\calQ}{\mathcal{Q}}
\newcommand{\calF}{\mathcal{F}}
\newcommand{\calL}{\mathcal{L}}
\newcommand{\calA}{\mathcal{A}}
\newcommand{\calB}{\mathcal{B}}
\newcommand{\calH}{\mathcal{H}}
\newcommand{\calD}{\mathcal{D}}
\newcommand{\calC}{\mathcal{C}}
\newcommand{\calW}{\mathcal{W}}
\newcommand{\cycF}{\mathcal{F}_{cyc}}
\newcommand{\cycH}{\mathcal{H}_{cyc}}
\newcommand{\cycL}{\mathcal{L}_{cyc}}
\newcommand{\cycQ}{\mathcal{Q}_{cyc}}
\newcommand{\decoratenew}{\frac{\psi_{\bullet_j}^{\alpha_j}-(-\psi_{\star_j})^{\alpha_j}}{-\psi_{\bullet_j}-\psi_{\star_j}}}
\newcommand{\decorateonenew}{\frac{\psi_{\bullet_1}^{\alpha_1}-(-\psi_{\star_1})^{\alpha_1}}{-\psi_{\bullet_1}-\psi_{\star_1}}}
\title[]{Wall-crossings for Hassett descendant potentials}
\author{Vance Blankers}
\address{Department of Mathematics, Colorado State University, Fort Collins, Colorado 80523-1874}
\email{\href{mailto:blankers@mail.colostate.edu }{blankers@mail.colostate.edu }}
\thanks{V.B. was supported by NSF FRG grant 1159964 (PI: Renzo Cavalieri)}
\author{Renzo Cavalieri}
\address{Department of Mathematics, Colorado State University, Fort Collins, Colorado 80523-1874}
\email{\href{mailto:renzo@math.colostate.edu}{renzo@math.colostate.edu}}
\thanks{R.C. acknowledges support from Simons Foundation Collaboration Grant 420720}
\subjclass[2010]{14N10,14N35}
\begin{document}
\allowdisplaybreaks
\setcounter{tocdepth}{1}

\maketitle

\begin{abstract}
This paper solves the combinatorics relating the intersection theory of $\psi$-classes of Hassett spaces to that of $\Moduli_{g,n}$. A generating function for intersection numbers of $\psi$ classes  on all Hassett spaces is obtained  from the Gromov-Witten potential of a point via a non-invertible transformation of variables. When restricting   to diagonal weights, the changes of variables are invertible and explicitly described as polynomial functions. Finally,  the comparison of potentials  is extended to the level of cycles: the pinwheel cycle potential, a generating function for  tautological classes  of rational tail type on $\Moduli_{g,n}$ is the right instrument to describe the pull-back to $\Moduli_{g,n}$ of all monomials of $\psi$ classes on  Hassett spaces.
\end{abstract}

\tableofcontents

\section*{Introduction}\label{sec:intro}

\subsection*{Overview of Results}
In this section we give an overview of the results of this paper, and  a non-technical paraphrase of the statements  of the main theorems.

The goal of this paper is to solve the combinatorics relating intersections of $\psi$ classes on Hassett spaces to intersections of $\psi$ classes on the Deligne-Mumford spaces $\Moduli_{g,n}$. For any weight data $\calA$, there is a natural contraction morphism $c_{\calA}:\Moduli_{g,n}\to \Moduli_{g, \calA}$, and one can compare a $\psi$  
class on $\Moduli_{g,n}$ with the pull-back of the corresponding class on $\Moduli_{g,\calA}$. It is well-known (recalled in Lemma \ref{lem:pull-back}) that the difference may be expressed as a sum of irreducible boundary divisors of rational tail type.

One knows how to multiply $\psi$ classes and boundary strata as elements of a decorated graph algebra $\mathfrak{G}$, whose elements yield representatives for tautological classes in the Chow ring of $\Moduli_{g,n}$ (see \cite{gp:nontaut}); comparing an arbitrary monomial in $\psi$ classes with the pull-back of its counterpart in a Hassett space may be performed systematically as a  computation in $\mathfrak{G}$; however, the combinatorial complexity grows very fast with respect to the number of marked points and the degree of the monomial. Since the tautological ring  of $\Moduli_{g,n}$ is a quotient of $\mathfrak{G}$, we exploit geometric relations in $R^\ast(\Moduli_{0,n})$ to give a precise formula expressing this comparison.
We paraphrase our first main result.

\begin{thmf}
For any weight data $\calA$,  and any multi-index $I$, the difference $\psi^{I}- c_\calA^\ast(\psi_{\calA}^I)$ is represented by a sum of decorated boundary strata, with the following features:
\begin{itemize}
	\item the sum is supported on {\bf pinwheel strata} (see Def. \ref{defpin}), parameterizing stable curves with one component of genus $g$ and rational components attaching only to the component of genus $g$.
	\item the pinwheel strata contributing non-trivially  are indexed by {\bf $\A$-totally unstable partitions} (see Def. \ref{totun}).
	\item the dual graphs of the strata are appropriately decorated by polynomials in $\psi$ classes on edges and legs incident to the unique vertex of genus $g$. 
\end{itemize}
\end{thmf}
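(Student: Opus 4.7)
The plan is to argue by induction on the total degree $|I|$, using Lemma \ref{lem:pull-back} for the base case and reducing the inductive step to a computation in the decorated graph algebra $\mathfrak{G}$. For $|I|=1$, Lemma \ref{lem:pull-back} gives $\psi_i - c_\mathcal{A}^*(\psi_{i,\mathcal{A}})$ as a sum of rational-tail boundary divisors. I would check that the surviving divisors (those not contracted by $c_\mathcal{A}$) are exactly the pinwheel divisors with one arm indexed by an $\mathcal{A}$-totally unstable partition with a single non-trivial part, handling the base case in the format demanded by the theorem.

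For the inductive step, write $\psi^I = \psi_j\cdot \psi^{I-e_j}$, apply the inductive hypothesis to $\psi^{I-e_j}$, substitute $\psi_j = c_\mathcal{A}^*(\psi_{j,\mathcal{A}}) + \Delta_j$ with $\Delta_j$ the pinwheel error term, and use $c_\mathcal{A}^*(\psi_{j,\mathcal{A}})\cdot c_\mathcal{A}^*(\psi_\mathcal{A}^{I-e_j}) = c_\mathcal{A}^*(\psi_\mathcal{A}^I)$ to isolate $\psi^I - c_\mathcal{A}^*(\psi_\mathcal{A}^I)$. The result is a sum of intersections in $\mathfrak{G}$: either a $\psi$ class against a pinwheel stratum, or two pinwheel strata against each other. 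Each is expanded via the standard rules: the first produces the $\psi$ class restricted to the stratum plus a marked-point-migration correction; the second produces a pinwheel with more arms when the marked-point sets are disjoint, and otherwise a nested configuration that is not yet in pinwheel form.

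The core technical step is converting the nested configurations into honest pinwheel strata using tautological relations on the genus-$0$ tails of $\Moduli_{g,n}$. The algebraic engine is the polynomial identity
\[
\psi_{\bullet}^{\alpha} = (-\psi_{\star})^{\alpha} + (\psi_{\bullet} + \psi_{\star})\cdot \frac{\psi_{\bullet}^{\alpha} - (-\psi_{\star})^{\alpha}}{\psi_{\bullet} + \psi_{\star}},
\]
combined with the standard fact that $\psi_{\bullet} + \psi_{\star}$ at a node is realized by a sum of boundary divisors on the rational tail. Iteratively applying this rewriting pushes higher powers of $\psi$ from inner vertices onto the edges incident to the central genus-$g$ vertex and simultaneously collapses nested rational tails into flatter configurations, with the decorations accumulating into the polynomial form $\frac{\psi_{\bullet}^{\alpha} - (-\psi_{\star})^{\alpha}}{\psi_{\bullet} + \psi_{\star}}$. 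The polynomiality of this fraction is essential, as it guarantees that the rewriting terminates inside $\mathfrak{G}$ without introducing denominators.

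The main obstacle is organizing the bookkeeping of this rewriting to show that the final expression is indexed precisely by $\mathcal{A}$-totally unstable partitions. The point is that pinwheel terms whose partition contains an $\mathcal{A}$-stable part are themselves pull-backs from $\Moduli_{g,\mathcal{A}}$, and such contributions must be absorbed into $c_\mathcal{A}^*(\psi_\mathcal{A}^I)$ rather than left as error. I would verify this by a contraction-theoretic analysis of the dual graphs: tracking which rational vertices get contracted by $c_\mathcal{A}$, confirming via Def. \ref{totun} that exactly the totally unstable partitions correspond to strata surviving the pull-back, and matching the coefficient of each such stratum with the asserted decorated $\psi$-polynomial.
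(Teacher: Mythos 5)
Your proposal is essentially the paper's own argument: induction on the degree with Lemma \ref{lem:pull-back} as the base case, flattening of nested rational tails via the boundary expression for $\psi_\bullet+\psi_\star$ on genus-$0$ factors (Lemma \ref{lem:psisums}), and the telescoping identity that accumulates the edge decorations into $\frac{\psi_\bullet^{\alpha}-(-\psi_\star)^{\alpha}}{-\psi_\bullet-\psi_\star}$. The only organizational difference is that the paper runs a lexicographic induction on (number of marks, degree) and uses Lemma \ref{lem:falldownhassett} to convert the restriction to an unstable divisor into a pull-back on a space with fewer marks, rather than multiplying decorated pinwheel strata directly in $\mathfrak{G}$; your version would need that extra induction (or the excess-intersection rules for self-intersecting strata) to justify the same step, but the substance is identical.
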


The formulas in Theorem \ref{thm:hassettcycle} highlight that the combinatorics comparing intersection of $\psi$ classes on Hassett and Deligne-Mumford spaces is independent of genus, number of marked points and the actual monomials that are being compared. This statement is made precise in the language of generating functions. For the sake of familiarity, we first restrict our attention to numerical intersections (cycles of degree equal to the dimension of the moduli space they are supported on).

\begin{lm2}
Denote by $\calH(\lambda;\mathbf{t})$ (the {\bf Hassett potential}, Def. \ref{def:hassettpotential}) a generating function for intersection numbers of $\psi$ classes on all Hassett spaces, and by $\calF(\lambda;\mathbf{x})$ (the {\bf Witten potential}, Def. \ref{def:gwpotential}) a generating function for intersection numbers of $\psi$ classes on $\Moduli_{g,n}$'s.
Then $\calH$ is obtained from $\calF$ via a non-invertible transformation of variables.
\end{lm2}

The change of variables statement follows naturally from  Theorem \ref{thm:wittentohassett}, which interprets the combinatorial formulas from  Theorem \ref{thm:hassettcycle} as relating the two potentials via the action of the exponential of a vector field. The shape of the vector field determines explicit formulas for the transformation of variables. While general formulas are cumbersome, the situation becomes elegant when restricting attention to Hassett spaces where all points have the same weight.

\begin{thm3}
Denote by $\calD_q(\lambda;\mathbf{t})$ (the {\bf $q$-diagonal Hassett potential}, Def. \ref{def:qhpot}) a generating function for intersection numbers of $\psi$ classes on Hassett spaces where all points have weight $\frac{1}{q}$. Then $\calD_q(\lambda;\mathbf{t})$ and $\calF(\lambda;\mathbf{x})$ are related by an invertible, polynomial change of variables, made explicit in Corollary \ref{cor:changeofvariablesdiag}.
\end{thm3}

 Hassett spaces with diagonal weights may be parameterized  by a rational, half line $\mathbb{Q} \cap [1, \infty)$,  with coordinate $a$  corresponding to the inverse of the weight of the points; for all natural numbers $q$, the Hassett moduli spaces are isomorphic in intervals $[q, q+1)$, and
 Theorem \ref{thm:wittentodiag} describes the wall-crossings  for descendant potentials in this family of moduli spaces. 

These results can be lifted to the level of cycles. After defining appropriate cycle-valued generating functions, one obtains that the combinatorics from Theorem \ref{thm:wittentohassett} is encoded in terms of the action of a differential operator on a potential of decorated pinwheel strata.

\begin{thm4}
Denote by $\cycH(\lambda;\mathbf{t})$ (the {\bf cycle Hassett potential}, Def. \ref{def:cychas}) a cycle valued generating function for intersections of $\psi$ classes on all Hassett spaces, and by $\cycQ(\lambda;\mathbf{x})$ (the {\bf cycle pinwheel potential}, Def. \ref{def:cycpin}) a generating function for intersections of $\psi$ classes on $\Moduli_{g,n}$'s. There exists a vector field $\cycL$ (the {\bf cycle fork operator}, Def. \ref{def:cfork}) such that the exponential of $\cycL$ applied to $\cycQ$ produces $\cycH$.
\end{thm4}

\subsection*{Context, Connections and Motivation}

Hassett spaces, or spaces of weighted marked stable curves, were first introduced in \cite{hassett} as a family of compactifications of $\moduli_{g,n}$ intended to study the minimal model program for moduli spaces of curves. Since then, there has been continued interest in understanding these spaces and their relationships to the Deligne-Mumford compactification $\Moduli_{g,n}$. In \cite{moon15}, Hassett spaces are shown to be log canonical models of $\Moduli_{g,n}$.

The tautological classes $\psi_i$  on $\Moduli_{g,n}$  (which we refer to as ordinary $\psi$ classes) are of fundamental importance in the study of the tautological ring of the moduli space of curves: they control the non-transversal intersections of boundary strata, and their push-forwards with respect to gluing and forgetful morphisms provide a natural set of additive generators for the tautological ring (\cite[Proposition 11]{gp:nontaut}). Further, intersection numbers of $\psi$ classes exhibit remarkable combinatorial and algebraic structure:  the content  of Witten's Conjecture/Kontsevich's Theorem (\cite{witten, kontsevich}) is that  a generating function for these intersection numbers is a $\tau$-function for the KdV hierarchy.

Hassett spaces define a family of birational models of $\Moduli_{g,n}$ paramerized by the chambers of a hyperplane arrangement in an $n$-dimensional rational orthant. Tautological cotangent line bundles for each mark, and therefore $\psi$ classes, are defined for all Hassett spaces (we refer to them as weighted $\psi$ classes for brevity). It is a very natural question to investigate how the intersection theory of $\psi$ classes  varies among Hassett spaces. Two $\psi$ classes on different Hassett spaces may be pulled-back to $\Moduli_{g,n}$, and their difference consists of a linear combination of boundary divisors of rational tail type. Comparing intersection cycles of $\psi$ classes on different Hassett spaces may be viewed as a combinatorial multiplication problem in a quotient of an algebra of decorated graphs of rational tail type. This problem was solved for intersection numbers (push-forward to a point  of zero dimensional cycles)  in \cite[Theorem 7.9]{alexeevguy}, where the authors give a precise formula relating any top intersection number of weighted $\psi$ classes in terms of intersection numbers of $\psi$ classes on $\Moduli_{g,n}$'s. 

We generalize the results of \cite{alexeevguy} in two ways: first, we organize  intersection numbers in generating functions and show that the combinatorial formulas relating intersection numbers  of weighted and ordinary $\psi$ classes are well-tuned with this organization. Second, we generalize the comparison from intersection numbers to cycles of arbitrary degree.

Generating functions are a powerful tool in enumerative geometry and combinatorics. Intersection numbers of weighted $\psi$ classes become coefficients of a formal power series $\calH$ in a countable number of variables indexed by a pair $(a,k)$, where $a$ is a rational number corresponding to the weight of a mark, and $k$ is a non-negative integer keeping track of the power of the $\psi$ class at that mark. Similarly, intersection numbers for ordinary $\psi$ classes are encoded in a potential $\calF$ where the variables only need the index $k$ (See Definitions \ref{def:hassettpotential} and \ref{def:gwpotential}). All combinatorial formulas relating intersection numbers of weighted and ordinary $\psi$ classes are encoded in the statement that the function $\calH$ is obtained from $\calF$ by a transformation of variables. Further, we provide a conceptually satisfying proof of this statement: the combinatorial formulas can naturally be described as the action of the exponential of a vector field $\calL$ on  $\calH$, and the change of variable is a direct consequence of exponential flow along  $\calL$.
A natural way to avoid dealing with a doubly-countable set of variables is to restrict attention to a one dimensional path in the parameter space for Hassett spaces. In this paper we choose to analyze the small diagonal line, parameterizing moduli spaces where all points have equal weights. We obtain a wall-crossing picture where the generating functions $\calD_q$ in each chamber are related to the ordinary Gromov-Witten potential $\calF$  by invertible polynomials, which are successive truncations of power series that can be described in terms of Schur polynomials. This is related to the wall-crossings of quasimaps in \cite{cfk:qm1,cfk:qm2}: on the one hand our picture is more restrictive since  Hassett spaces of diagonal weights may be compared to quasimaps to  a point target; on the other,  our statement extends the wall-crossing picture from the $J$ function to the full descendant potential.

In recent years, the scope of Gromov-Witten theory and related areas has vastly enlarged, both in terms of studying many variants of moduli spaces of curves and maps, and in terms of going beyond the enumerative geometry of zero-dimensional intersection cycles to study families of tautological cycles of arbitary degree, as surveyed in \cite{pandhamg}.  A very powerful technique to deal with the combinatorial complexity of families of tautological classes is the action of an $R$-matrix on cohomological field theories (CohFT), \cite{ppz:spin}. Under suitable circumstances this  yields graph formulas (\cite{CavTarasca}) describing such families. The collection of all intersection cycles of $\psi$ classes may be organized to satisfy most, but not all axioms of a CohFT: depending on the weight of the points, either the pull-back axiom fails, or the splitting axiom fails for divisors of rational tails. Nonetheless, our comparison of  intersection cycles of $\psi$ classes is given as a {\bf graph formula}, where the edge decorations are very much suggestive of the (logarithm of the) action of an $R$-matrix, or, equivalently, to the action of the quantized operator $\hat{z}^k$ (see \cite{giv:qqh}). It would be very interesting, but is currently beyond our reach,  to make such  observations precise and see what consequences they entail about the structure of  cycle potentials.

This works completes a program that had begun with the two most special cases:  \cite{nand} studies Hassett spaces with all weights equal to $1/2$, where the boundary corrections consist only of attaching tripods; \cite{bckappa} explores the asymptotic limit of Hassett spaces as the weights become infinitesimal, and  connects it to the intersection theory of $\kappa$ classes on $\Moduli_g$.

We expect that these techniques may be extended to further families of birational compactifications of moduli spaces, where the stability conditions are modified in terms of some well-structured, combinatorial data. For example, \cite{fry} is beginning the exploration of moduli spaces of pointed curves where the stability is encoded in a graphic matroid.

\subsection*{Acknowledgments} The authors wish to thank Nick Ercolani, Andreas Gross, Y.P. Lee and Hannah Markwig for many helpful discussions related to this project.

\section{Preliminaries on $\Moduli_{g,n}$}\label{sec:prelimmgn}

We assume some familiarity with $\moduli_{g,n}$ and its Deligne-Mumford compactification $\Moduli_{g,n}$; otherwise, we recommend \cite{harrismorrison,vakil08} as excellent resources. This section serves  to highlight some of the facts particularly relevant to our purposes and to establish notation.

The moduli space of isomorphism classes of stable genus $g$ curves with $n$ (ordered, distinct, smooth) marked points is denoted by $\Moduli_{g,n}$; if $2g-2+n > 0$, this space is a smooth Deligne-Mumford stack of complex dimension $3g-3+n$. We denote points of $\Moduli_{g,n}$ by $[C; p_1,\dots,p_n]$. For fixed $g$, we may vary $n$ to obtain a family of moduli spaces related by \emphbf{forgetful morphisms}: for each $1\leq i \leq n$, the map
\begin{align}
\pi_{i}:\Moduli_{g,n}\to\Moduli_{g,n-1}
\end{align}
forgets the $i$-th marked point and stabilizes the curve if necessary by contracting unstable rational components. The morphism $\pi_{i}$ functions as a universal family for $\Moduli_{g,n}$ by identifying the universal curve $\overline{\mathcal{U}}_{g,n} \to \Moduli_{g,n}$ with $:\pi_{n+1}:\Moduli_{g,n+1}\to \Moduli_{g,n}$. The \emphbf{$i$-th tautological section}
\begin{align}
\sigma_{i}:\Moduli_{g,n}\to \overline{\mathcal{U}}_{g,n} \cong \Moduli_{g,n+1}
\end{align}
assigns to an $n$-marked curve $[C;p_1,\dots,p_n]$ the point $p_i$ in the fiber over $[C;p_1,\dots,p_n]$ in the universal curve.

There  are \emphbf{gluing morphisms}
\begin{align}
gl:\Moduli_{g_1,n_1+1} \times \Moduli_{g_2,n_2+1} \to \Moduli_{g_1+g_2,n_1+n_2}
\end{align}
and \emphbf{clutching morphisms}
\begin{align}
cl: \Moduli_{g-1,n+2} \to \Moduli_{g,n}.
\end{align}
The push-forward of the fundamental class under a gluing or clutching morphism is (a multiple of) an irreducible boundary divisor. Together, the forgetful, gluing, and clutching morphisms are the \emphbf{tautological morphisms}.

Due to the significant complexity of the full Chow ring of $\Moduli_{g,n}$, the \emphbf{tautological subring} $R^*(\Moduli_{g,n})$ is often considered instead (for both rings we assume rational coefficients). This ring is simultaneously defined for all $g$ and $n$ such that $2g-2+n>0$ by setting $R^*(\Moduli_{g,n})$ to be the smallest subring of $A^*(\Moduli_{g,n})$ containing the fundamental class $[\Moduli_{g,n}]$ which is closed under push-forward by the tautological morphisms. Of particular interest in this paper are a family of tautological classes called $\psi$-classes.

\begin{definition}
For $i\in [n] = \{1,2,\dots, n\}$, the class $\psi_i\in R^1(\Moduli_{g,n})$ is defined to be
\begin{align}
\psi_i:= c_1(\sigma_i^*(\omega_\pi)),
\end{align}
where $\omega_\pi$ denotes the relative dualizing sheaf of the universal family $\pi:\overline{\mathcal{U}}_{g,n} \to \Moduli_{g,n}$ and $c_1$ is the first Chern class.
\end{definition}

Certain boundary divisors on $\Moduli_{g,n}$ are given special notation. For  $P\sqcup Q = [n]$ and $|Q|\geq 2$, $D(P\,|\,Q)$ denotes the class of the boundary divisor which generically parametrizes nodal curves with a genus $g$ component containing the marks specified by $P$ attached to a rational tail containing the marks specified by $Q$. Such a divisor class $D = D(P\,|\,Q)$ is the pushfoward along the gluing morphism $gl_D:\Moduli_{g,P\cup\{\bullet\}}\times \Moduli_{0,Q\cup\{\star\}} \to \Moduli_{g,n}$ of the fundamental class.

The $\psi$-classes are not stable under pull-back by forgetful morphisms, as described in the following lemma.

\begin{lemma}[\cite{kockpsi}] \label{lem:psicomp} 
Consider the forgetful morphism $\pi_{n+1}: \Moduli_{g,n+1} \to \Moduli_{g,n}$. For $i\in [n]$, we have:

\hfill
$\psi_i = \pi_{n+1}^\ast (\psi_i) + D\bigg([n]\smallsetminus\{i,n+1\}\,\bigg|\,\{i,n+1\}\bigg).$
\qed

\end{lemma}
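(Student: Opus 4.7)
The plan is to upgrade the claimed equality of classes to an isomorphism of line bundles on $\Moduli_{g,n+1}$,
\[
\mathbb{L}_i \;\cong\; \pi_{n+1}^*\bigl(\mathbb{L}_i^{\Moduli_{g,n}}\bigr) \otimes \mathcal{O}_{\Moduli_{g,n+1}}(D),
\]
where $\mathbb{L}_i$ denotes the $i$-th cotangent line bundle on $\Moduli_{g,n+1}$ and $D := D([n]\smallsetminus\{i\} \mid \{i, n+1\})$; taking first Chern classes then yields the formula.

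To construct a canonical morphism $\rho : \pi_{n+1}^*(\mathbb{L}_i^{\Moduli_{g,n}}) \to \mathbb{L}_i$ fiberwise, I would proceed as follows. At a point $[C; p_1, \dots, p_{n+1}] \in \Moduli_{g,n+1}$, let $\phi_C : C \to C'$ denote the stabilization contraction obtained by forgetting $p_{n+1}$; the pullback of differentials $\phi_C^* : T^*_{\phi_C(p_i)} C' \to T^*_{p_i} C$ is the fiberwise instance of $\rho$. Away from $D$, the contraction $\phi_C$ is a local isomorphism near $p_i$, so $\rho$ is an isomorphism there; on $D$, the rational tail carrying marks $i$ and $n+1$ collapses under $\phi_C$, so $d\phi_C|_{p_i} = 0$ and $\rho$ vanishes along $D$. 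Hence $\psi_i - \pi_{n+1}^*\psi_i = k[D]$ for some non-negative integer $k$.

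The main step is to verify $k = 1$, which I would do by restricting via the gluing morphism $gl_D : \Moduli_{g,n} \times \Moduli_{0,3} \to \Moduli_{g,n+1}$ describing $D$, with node markers $\bullet$ on the genus $g$ side and $\star$ on the rational tail side. The standard normal bundle identity gives $gl_D^*[D] = -\psi_\bullet - \psi_\star$, which reduces to $-\psi_\bullet$ since $\Moduli_{0,3}$ is a point. Meanwhile $gl_D^*\psi_i = 0$ (mark $i$ sits on the rational-tail factor), while $gl_D^*\pi_{n+1}^*\psi_i = \psi_\bullet$, since after contraction mark $i$ lands at the node $\bullet$ on the genus $g$ component. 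Combining, $gl_D^*(\psi_i - \pi_{n+1}^*\psi_i) = -\psi_\bullet = gl_D^*[D]$, forcing $k = 1$.

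The main obstacle is this multiplicity step: correctly identifying the restrictions of the three line bundles in play under $gl_D$ and invoking the normal-bundle formula for boundary divisors. Once these identifications are in place, the rest of the argument reduces to routine bookkeeping with tautological morphisms.
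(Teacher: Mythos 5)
The paper does not prove this lemma at all: it is quoted from \cite{kockpsi} with a \qed, so there is no internal argument to compare against. Your proof is a correct and essentially standard derivation of the cited fact. The two-step structure is sound: the differential of the stabilization contraction gives a sheaf map $\pi_{n+1}^*\mathbb{L}_i \to \mathbb{L}_i$ that is an isomorphism exactly off $D$ (the only locus where $p_i$ sits on the contracted component is where $p_i$, $p_{n+1}$ and a node span a rational tail), so $\psi_i - \pi_{n+1}^*\psi_i = k[D]$ with $k\geq 1$ (not merely $k\geq 0$, since $\rho$ genuinely vanishes along $D$); and your restriction computation $gl_D^*[D] = -\psi_\bullet - \psi_\star = -\psi_\bullet$, $gl_D^*\psi_i = 0$, $gl_D^*\pi_{n+1}^*\psi_i = \psi_\bullet$ is correct. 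Two small caveats. First, pinning down $k$ from $(k-1)\psi_\bullet = 0$ requires $\psi_\bullet \neq 0$ on the genus-$g$ factor $\Moduli_{g,([n]\smallsetminus\{i\})\cup\{\bullet\}}$; this holds whenever that space has positive dimension (top $\psi$-intersection numbers are positive), but fails for $\pi_4:\Moduli_{0,4}\to\Moduli_{0,3}$, where you should just note the identity directly ($\pi_4^*\psi_i = 0$ and $\psi_i = [D]$ is a boundary point). Second, the fiberwise description of $\rho$ should really be packaged as a morphism of universal curves over $\Moduli_{g,n+1}$ inducing a map of the relative dualizing sheaves, so that $\rho$ is visibly an algebraic section of $\mathbb{L}_i\otimes\pi_{n+1}^*\mathbb{L}_i^{-1}$ and $k$ is its order of vanishing along $D$; this is routine but is the point where the argument becomes rigorous. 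An alternative route to $k=1$, closer to Kock's, is a local computation in a smoothing parameter of the node, which avoids the degenerate-case issue entirely.
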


This gives a canonical boundary expression for the sum of two $\psi$-classes on $\Moduli_{0,n}$ as an easy corollary.

\begin{lemma}[{{\cite[Lemma 1.4]{bcomega}}}] \label{lem:psisums}
Let $n\geq 3$; for any distinct $i,j\in [n]$ the following idenitity holds in the tautological ring of  $\Moduli_{0,n}$:

\hfill
$\DD \psi_i + \psi_j = \sum_{\tiny{\begin{array}{c} P\ni j \\ Q \ni  i \end{array}}} D(P\,|\,Q).$
\qed
\end{lemma}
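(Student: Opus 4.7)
The plan is to combine the classical single-$\psi$-class boundary formula on $\Moduli_{0,n}$ with the genus-zero symmetry $D(P|Q)=D(Q|P)$, which identifies the two labellings with the same irreducible divisor when both sides of the partition are stable rational tails. The preliminary formula states that, for any mark $i$ and any two distinct $a,b \in [n]\setminus\{i\}$,
\[
\psi_i = \sum_{\substack{i \in Q \\ \{a,b\}\subset P}} D(P|Q),
\]
the sum ranging over stable partitions $[n]=P\sqcup Q$. I would first establish this by induction on $n$ using Lemma \ref{lem:psicomp}: the case $n=3$ is trivial (the indexing set is empty), and in the inductive step the correction divisor $D([n]\setminus\{i\}|\{i,n+1\})$ supplied by Lemma \ref{lem:psicomp} is precisely the new partition of $[n+1]$ whose relevant side only became stable upon adjoining the new mark.

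For the lemma itself, I would fix any third mark $k \in [n]\setminus\{i,j\}$ (for $n=3$ both sides are zero and there is nothing to prove). Applying the preliminary formula with base marks $(j,k)$ to $\psi_i$ and with $(i,k)$ to $\psi_j$, then relabelling $P \leftrightarrow Q$ in the second sum via the genus-zero symmetry, yields
\[
\psi_i+\psi_j = \sum_{\substack{i\in Q,\ j\in P \\ k\in P}} D(P|Q) + \sum_{\substack{i\in Q,\ j\in P \\ k\in Q}} D(P|Q).
\]
Since every partition $[n]=P\sqcup Q$ with $i\in Q$ and $j\in P$ places the auxiliary mark $k$ in exactly one of the two parts, these two sums merge into the desired right-hand side.

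The main subtlety I anticipate is dovetailing the stability conditions $|P|,|Q|\geq 2$ when the two sums are merged: whichever side contains $k$ automatically has at least two marks, while stability of the opposite side is encoded in the indexing constraint inherited from the preliminary formula. The genus-zero symmetry is essential here — without it one could not freely move $k$ across the partition — and it guarantees that every valid partition of $[n]$ appears exactly once with no double counting.
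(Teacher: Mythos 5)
Your proof is correct and follows the route the paper itself indicates: the lemma is presented as an easy consequence of Lemma \ref{lem:psicomp}, and your argument derives the standard one-$\psi$ boundary formula from that comparison lemma by induction and then adds two instances of it with complementary base marks, using the genus-zero symmetry $D(P|Q)=D(Q|P)$ to merge the sums. This is the same proof as in the cited reference, and your bookkeeping of the stability constraints when merging the two sums is accurate.
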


\section{Hassett Spaces}\label{sec:hassettbackground}

This section contains basic definitions and relevant background on Hassett spaces.

\begin{definition}
Fix \emphbf{(ordered) weight data} $\calA = (a_1,a_2,\dots,a_n)$ so that $a_i\in (0,1]\cap\mathbb{Q}$ and $a_i \geq a_{i+1}$. A (nodal) marked curve $(C;p_1,\dots,p_n)$ is \emphbf{$\A$-stable} if 
\begin{itemize}
\item $p_i\in C$ is smooth for every $i\in [n]$;
\item $\DD \omega_C + \sum_{i=1}^n a_ip_n$ is ample; and
\item for every point $x\in C$, we have $\DD\sum_{p_i=x}a_i \leq 1$.
\end{itemize}
The  \emphbf{Hassett space} $\Moduli_{g,\calA}$ is the moduli space of $\A$-stable curves of genus $g$ up to isomorphism. When weight data is \emphbf{diagonal}, i.e. $a_1 = a_2 = \cdots a_n = a$, we write $\calA = a^n$.
\end{definition}

\begin{remark}
Define the \emphbf{total weight} of a point $x\in C$ to be $\DD\sum_{p_i=x} a_i$ if $x$ is smooth and $1$ if $x$ is a node. The total weight of an irreducible component is the sum of the total weights of all of its points. The second condition of $\calA$-stability may be reinterpreted to say that the rational components of an $\A$-stable curve $C$ have total weight greater than two and the genus one components have total weight greater than zero.
\end{remark}

When $2g-2+\sum a_i > 0$, the Hassett space $\Moduli_{g,\calA}$ is a non-empty, smooth, proper Deligne-Mumford stack. In analogy with $\Moduli_{g,n}$,  Hassett spaces have forgetful morphisms; for generic weight data \footnote{generic  means that there is no subset $J \subseteq [n]$ with $\sum_{j\in J} a_j = 1$. One may remove the genericity requirement by allowing points to have weight $0$; see \cite{hassett}. }
\begin{align}
\pi_{n+1}:\overline{\mathcal{U}}_{g,\calA} \to \Moduli_{g,\calA}
\end{align}
identifies the universal curve with $\Moduli_{g,\hat{\calA}}$ for $\hat{\calA} = (a_1,\dots,a_n,\epsilon)$ for $\epsilon$ sufficiently small. The universal family has $n$ \emphbf{tautological sections}
\begin{align}
\sigma_i:\Moduli_{g,\calA} \to \overline{\mathcal{U}}_{g,\calA},
\end{align}
where the $i$-th section assigns to $[C; p_1,\dots,p_n]$ the point $p_i$ in the fiber over $[C;p_1,\dots,p_n]$ in the universal curve.

Hassett spaces also have gluing and clutching morphisms and a \emphbf{tautological subring} $R^*(\Moduli_{g,\calA}) \subset A^*(\Moduli_{g,\calA})$, which is defined analogously to the tautological ring for $\Moduli_{g,n}$.

The Hassett space $\Moduli_{g,1^n}$ coincides with the Deligne-Mumford compactification $\Moduli_{g,n}$. More generally, the key difference between $\calA$-stable curves and Deligne-Mumford stable curves is that the former may have marked points come together provided the combined weight is low enough.

For fixed $n$, there is a partial ordering on weight data: $\calA \succeq \calB$ if $a_i\geq b_i$ for all $i\in [n]$. Given $\calA\succeq \calB$, there exists a birational \emphbf{reduction morphism}
\begin{align}
r_{\calB,\calA}:\Moduli_{g,\calA}\to\Moduli_{g,\calB},
\end{align}
which on the level of curves reduces the weights and stabilizes if necessary by contracting unstable rational components (see Figure \ref{fig:reduction}). The space $\Moduli_{g,n}$ admits reduction morphisms to all other Hassett spaces; for a given $\calA$ we call  this reduction a \emphbf{contraction morphism} and denote it
\begin{align}
c_{\calA}:\Moduli_{g,n}\to\Moduli_{g,\calA}.
\end{align}

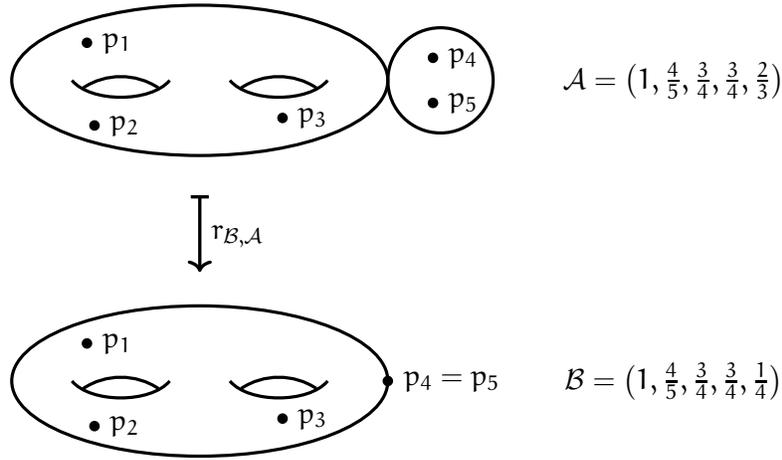
\begin{figure}[t]
\begin{tikzpicture}


\draw[very thick] (0,0) ellipse (2.5cm and 1cm);

\draw[very thick] (3.2,0) circle (0.7);

\draw[very thick] (-1.55,-0.1) .. controls (-1.25,0.1) and (-0.85,0.1) .. (-0.55,-0.1); 
\draw[very thick] (-1.7,0) .. controls (-1.45,-0.3) and (-0.65,-0.3) .. (-0.4,0); 

\draw[very thick] (1.55,-0.1) .. controls (1.25,0.1) and (0.85,0.1) .. (0.55,-0.1); 
\draw[very thick] (1.7,0) .. controls (1.45,-0.3) and (0.65,-0.3) .. (0.4,0); 

\fill (-1.5,0.5) circle (0.07); \node at (-1.1,0.5) {$p_1$};
\fill (-1.4,-0.6) circle (0.07); \node at (-1,-0.6) {$p_2$};
\fill (1.1,-0.5) circle (0.07); \node at (1.5,-0.5) {$p_3$};
\fill (3.1,0.3) circle (0.07); \node at (3.5,0.3) {$p_4$};
\fill (3.1,-0.3) circle (0.07); \node at (3.5,-0.3) {$p_5$};

\node at (6.3,0) {$\calA = \left(1,\frac{4}{5},\frac{3}{4}, \frac{3}{4}, \frac{2}{3}\right)$};


\draw[->,line width=1.25pt] (0,-1.55) -- (0,-2.55); \node at (0.5,-2.05) {$r_{\calB,\calA}$};
\draw[line width=1.25pt] (-0.12,-1.55) -- (0.12,-1.55);


\draw[very thick] (0,-4) ellipse (2.5cm and 1cm);

\draw[very thick] (-1.55,-4.1) .. controls (-1.25,-3.9) and (-0.85,-3.9) .. (-0.55,-4.1); 
\draw[very thick] (-1.7,-4) .. controls (-1.45,-4.3) and (-0.65,-4.3) .. (-0.4,-4); 

\draw[very thick] (1.55,-4.1) .. controls (1.25,-3.9) and (0.85,-3.9) .. (0.55,-4.1); 
\draw[very thick] (1.7,-4) .. controls (1.45,-4.3) and (0.65,-4.3) .. (0.4,-4); 

\fill (-1.5,-3.5) circle (0.07); \node at (-1.1,-3.5) {$p_1$};
\fill (-1.4,-4.6) circle (0.07); \node at (-1,-4.6) {$p_2$};
\fill (1.1,-4.5) circle (0.07); \node at (1.5,-4.5) {$p_3$};
\fill (2.5,-4) circle (0.07); \node at (3.35,-4) {$p_4=p_5$};

\node at (6.3,-4) {$\calB = \left(1,\frac{4}{5},\frac{3}{4}, \frac{3}{4}, \frac{1}{4}\right)$};

\end{tikzpicture}
\caption{An example of the action of a reduction morphism on curves. On top, $a_4+a_5 > 1$, but on the bottom, $b_4+b_5 \leq 1$.}
\label{fig:reduction}
\end{figure}

All appropriate forgetful, reduction, contraction morphisms commute and induce maps on the tautological rings of the relevant Hassett spaces. The contraction morphisms define special boundary strata in $\Moduli_{g,n}$.

\begin{definition} \label{def:unstablestrat}
A boundary stratum $\Delta$ in $\Moduli_{g,n}$ is \emphbf{$\calA$-stable} if the topological type of the general curve parametrized by $\Delta$ does not change under the application of $c_{\calA}$. Otherwise $\Delta$ is \emphbf{$\calA$-unstable}.
\end{definition}

Hassett spaces also admit $\psi$-classes, which are the protagonists of the remainder of the paper.

\begin{definition}
Fix weight data $\calA$. For each $i\in [n]$, the class $\psi_{i,\calA}\in R^1(\Moduli_{g,\calA})$ is defined to be
\begin{align*}
\psi_{i,\calA} := c_1(\sigma_i^*(\omega_{\pi}))
\end{align*}
where  and $c_1$ is the first Chern class, $\omega_\pi$ denotes the relative dualizing sheaf of the universal family $\pi:\overline{\mathcal{U}}_{g,\calA} \to \Moduli_{g,\calA}$, and $\sigma_i:\Moduli_{g,\calA}\to\overline{\mathcal{U}}_{g,\calA}$ is the $i$-th tautological section. These are called \emphbf{$\calA$-weighted $\psi$-classes} or just \emphbf{weighted $\psi$-classes}.
\end{definition}

\begin{remark}
\label{rem:omega}
When $\calA = 1^n$, we omit the weight data and write $\psi_{i,1^n} = \psi_i$, since this is the standard $i$-th $\psi$-class on $\Moduli_{g,n}$.
\end{remark}

Weighted $\psi$-classes are generally unstable under pull-back along reduction morphisms in a way reminiscent of unweighted $\psi$-classes and forgetful morphisms. The next lemma makes this precise.

\begin{lemma}[{{\cite[Lemma 5.3]{alexeevguy}}}] \label{lem:pull-back}
For weight data $\calA \succeq \calB$,
\begin{align}
\psi_{i,\calA} = r_{\calB,\calA}^*\psi_{i,\calB} + D,
\end{align}
where $D$ is the sum of all boundary divisors whose generic element is a nodal curve with a genus $g$ component attached to a rational tail containing the $i$-th marked point and which is $\calA$-stable but not $\calB$-stable.
\hfill $\square$
\end{lemma}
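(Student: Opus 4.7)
The plan is to unpack the definition $\psi_{i,\star} = c_1(\sigma_i^\ast\omega_\pi)$ on both sides and compare via the commutative diagram of universal families induced by $r_{\calB,\calA}$. The essential input will be a formula expressing the discrepancy between the two relative dualizing sheaves, which is supported on the rational tails being contracted.

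First I would set up the commutative square
\[
\begin{CD}
\overline{\mathcal{U}}_{g,\calA} @>\tilde{r}>> \overline{\mathcal{U}}_{g,\calB} \\
@V\pi^\calA VV @VV\pi^\calB V \\
\Moduli_{g,\calA} @>r_{\calB,\calA}>> \Moduli_{g,\calB}
\end{CD}
\]
where $\tilde r$ is induced on universal curves and fiberwise contracts exactly those rational components that are $\calA$-stable but $\calB$-unstable; by construction of the Hassett reduction the tautological sections are compatible, $\tilde r\circ\sigma_i^\calA = \sigma_i^\calB\circ r_{\calB,\calA}$. Next I would establish a line bundle identification $\omega_{\pi^\calA}\cong \tilde r^\ast\omega_{\pi^\calB}\otimes\mathcal{O}(E)$ in $\Pic(\overline{\mathcal{U}}_{g,\calA})$, where $E\subset\overline{\mathcal{U}}_{g,\calA}$ is the union of all contracted rational tails. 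The equality away from $E$ is automatic since $\tilde r$ is an isomorphism there, and the appearance of each component of $E$ with multiplicity one comes from the standard adjunction identity $\omega_{C}|_T = \mathcal{O}_{\mathbb{P}^1}(-1)$ for a rational tail $T$ attached at a single node.

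Pulling back by $\sigma_i^\calA$ would then yield
\[
\psi_{i,\calA} = c_1\bigl((\sigma_i^\calA)^\ast\omega_{\pi^\calA}\bigr) = r_{\calB,\calA}^\ast\psi_{i,\calB} + (\sigma_i^\calA)^\ast[E].
\]
The class $(\sigma_i^\calA)^\ast[E]$ is supported precisely on those boundary divisors of $\Moduli_{g,\calA}$ whose general curve has $p_i$ lying on a contracted rational tail attached to the genus $g$ component, which matches the description of $D$ in the statement; on the other hand, components of $E$ that do not meet $\sigma_i^\calA$ contribute nothing, which accounts for why $D$ omits tails that do not contain $p_i$.

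The hard part will be pinning down that each such divisor appears in $D$ with multiplicity exactly one. My plan is to factor $r_{\calB,\calA}$ as a composition of elementary reductions, each crossing a single wall of the Hassett chamber decomposition and collapsing a single isotopy class of rational tail. For each elementary step the local picture around a boundary stratum in $\overline{\mathcal{U}}_{g,\calA}$ reduces to a $\mathbb{P}^1$-bundle meeting the $i$-th section transversally in a single section, so transversality and multiplicity one are immediate; summing the contributions along the factorization reassembles $D$ with coefficient one on each component. This reduction to an elementary local model is what lets one avoid a direct discrepancy computation in full generality.
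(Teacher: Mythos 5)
The paper does not actually prove this lemma: it is quoted verbatim from \cite[Lemma 5.3]{alexeevguy}, so there is no internal argument to compare yours against. Judged on its own terms, your strategy is the standard and correct one (it is the same mechanism behind Lemma \ref{lem:psicomp}): compare $\omega_{\pi^{\calA}}$ with $\tilde r^\ast\omega_{\pi^{\calB}}$ on the universal curve, identify the discrepancy as $\mathcal{O}(E)$ supported on the contracted tails, and restrict along the compatible sections. Two points deserve attention. First, the coefficient $+1$ on each component $E_j$ of $E$ is not forced by $\omega_C|_T\cong\mathcal{O}_{\mathbb{P}^1}(-1)$ alone; you also need $\mathcal{O}(E_j)|_T\cong\mathcal{O}_{\mathbb{P}^1}(-1)$, which follows from writing $\pi^\ast[D_j]=[E_j]+[E_j^c]$ and noting that the complementary component $E_j^c$ meets $T$ transversally in the single node, and you then need the standard observation that a line bundle on $\overline{\mathcal{U}}_{g,\calA}$ of the correct degree on every fiber of $\tilde r$ and trivial off $E$ is determined up to a bundle trivial outside codimension two. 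These are routine but they are the actual content of the step you attribute to ``adjunction.''

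Second, and more substantively, the final paragraph is the weak link. The factorization of $r_{\calB,\calA}$ into elementary wall-crossings does not let you simply ``sum the contributions'': composing $\psi_{i,\calA}=r_1^\ast\psi_{i,\calA'}+D_1$ with $\psi_{i,\calA'}=r_2^\ast\psi_{i,\calB}+D_2$ forces you to compute $r_1^\ast D_2$, and the total transform of a boundary divisor under a reduction morphism is \emph{not} a single boundary divisor: exceptional divisors $D(P'|Q')$ of $r_1$ with $Q'\supseteq Q$ map into the closure of $D(P|Q)$ and appear in the pullback with multiplicities you would have to control. So as written this step would not go through without a separate lemma on pullbacks of boundary classes. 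Fortunately the detour is unnecessary: multiplicity one follows directly from the local picture at a generic point of $\sigma_i^{\calA}(D_j)$, where $p_i$ sits in the smooth interior of the tail, the divisor $E_j$ coincides locally with $\pi^{-1}(D_j)$, and hence $\sigma_i^{\calA\ast}\mathcal{O}(E_j)=\mathcal{O}(D_j)$ on the nose. I recommend replacing the wall-crossing paragraph with this one-line local computation.
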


When $r_{\calB,\calA} = c_{\calB}$, the condition on the divisorial correction simplifies to be the sum of divisors  which are $\calB$-unstable in the sense of Definition \ref{def:unstablestrat}.

\begin{example}
\label{ex:wtpsi}
Let $\calB = \left(\frac{7}{8},\frac{2}{3},\frac{1}{3},\frac{1}{4}, \frac{1}{6}\right)$.
\begin{align*}
c_{\calB}^*\psi_{1,\calB} &= \psi_{1} \\\
c_{\calB}^*\psi_{2,\calB} &= \psi_{2} - D(\{1,4,5\}\,|\,\{2,3\}) - D(\{1,3,5\} \,|\, \{2,4\}) - D(\{1,3,4\}\,|\,\{2,5\}) \\
c_{\calB}^*\psi_{3,\calB} &= \psi_{3} - D(\{1,4,5\}\,|\,\{2,3\}) - D(\{1,2,5\}\,|\,\{3,4\}) - D(\{1,2,4\}\,|\,\{3,5\}) \\
& \hspace{7.5cm} - D(\{1,2\}\,|\,\{3,4,5\})
\end{align*}
$\psi_{4,\calB}$ and $\psi_{5,\calB}$ pull back analogously to $\psi_{3,\calB}$.
\end{example}

The next two lemmas describe how weighted $\psi$-classes restrict to boundary divisors depending upon the $\calA$-stability of the divisors; the former concerns $\calA$-stable divisors and the latter $\calA$-unstable divisors.



\begin{lemma} \label{lem:donothing}
Fix weight data $\calA = (a_1,\dots,a_n)$. Let $D = D(P | Q)$ be an $\calA$-stable divisor of rational tails type on $\Moduli_{g,n}$. Then for any non-negative integer $k$
\begin{align}
c^*_\calA\psi_{i,\calA}^k\cdot D = {gl_{D}}_\ast (\psi^k_{i}).
\end{align}
\end{lemma}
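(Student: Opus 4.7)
The plan is to combine the projection formula with the naturality of $\psi$-classes under gluing, reducing the identity to Lemma~\ref{lem:pull-back} applied on the boundary factor carrying mark $i$. Since $D = (gl_D)_\ast(1)$, the projection formula gives
\[
c_\calA^\ast \psi_{i,\calA}^k \cdot D \;=\; (gl_D)_\ast\bigl(gl_D^\ast c_\calA^\ast \psi_{i,\calA}^k\bigr),
\]
so the task reduces to identifying $gl_D^\ast c_\calA^\ast \psi_{i,\calA}^k$ with $\psi_i^k$ on whichever factor of the boundary product contains mark $i$.

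To do this, I would exploit the $\calA$-stability of $D$, which ensures that the image $D_\calA := c_\calA(D)$ is an honest boundary divisor of $\Moduli_{g,\calA}$ and produces a commutative square
\[
\begin{CD}
\Moduli_{g,P\cup\{\bullet\}} \times \Moduli_{0,Q\cup\{\star\}} @>{gl_D}>> \Moduli_{g,n} \\
@V{c_{\calA'}\times c_{\calA''}}VV @VV{c_\calA}V \\
\Moduli_{g,\calA'} \times \Moduli_{0,\calA''} @>{gl_{D_\calA}}>> \Moduli_{g,\calA}
\end{CD}
\]
in which $\calA'$ and $\calA''$ denote $\calA|_P$ and $\calA|_Q$, augmented with weight $1$ at the nodes $\bullet$ and $\star$. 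Commutativity rewrites the pullback as $(c_{\calA'}\times c_{\calA''})^\ast\, gl_{D_\calA}^\ast \psi_{i,\calA}^k$, and the standard behavior of cotangent line classes under gluing, extended to the weighted setting, identifies $gl_{D_\calA}^\ast\psi_{i,\calA}$ with the weighted $\psi$-class $\psi_{i,\calA'}$ or $\psi_{i,\calA''}$ on the factor carrying mark $i$.

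The concluding step is to apply Lemma~\ref{lem:pull-back} on that factor to compare the pullback of the weighted class with the ordinary $\psi_i$, and to verify that the resulting divisorial correction is trivial. The crucial input is that the glued node carries weight $1$ in $\calA'$ and $\calA''$: any rational tail on the factor that contains both mark $i$ and the node is stable for free, while a tail containing mark $i$ but not the node would glue back via $gl_D$ to a rational-tails configuration in $\Moduli_{g,n}$ that, by the $\calA$-stability of $D$, does not survive as an independent correction on the base. Once the factor correction is shown to be trivial, the pullback becomes $\psi_i^k$ on the factor and pushing forward via $(gl_D)_\ast$ produces $(gl_D)_\ast(\psi_i^k)$, as claimed. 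The main obstacle lies precisely in this final combinatorial verification, which must be carried out separately in the cases $i \in P$ and $i \in Q$ by translating the $\calA$-stability hypothesis for $D$ into the absence of genuinely unstable sub-tails carrying mark $i$ on the relevant factor.
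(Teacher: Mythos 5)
Your overall strategy coincides with the paper's: both arguments reduce the claim, via the projection formula $c_\calA^\ast\psi_{i,\calA}^k\cdot D = {gl_D}_\ast\bigl(gl_D^\ast c_\calA^\ast\psi_{i,\calA}^k\bigr)$ and the commutative square relating $gl_D$, $gl_{D_\calA}$ and the contractions on the two factors (with the nodes $\bullet,\star$ given weight $1$), to identifying the pull-back of the weighted class along the top row with the ordinary $\psi_i$ on the factor carrying the mark $i$. The difference lies in how that last identification is handled: the paper disposes of it by asserting that $\calA$-stability of $D$ makes $c_\calA\vert_D$ an isomorphism onto its image, whereas you propose to apply Lemma \ref{lem:pull-back} on the factor and then argue that the resulting divisorial corrections vanish. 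You have correctly located the step that carries all the content, and you are right to flag it as the main obstacle.

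The gap is that the vanishing you invoke does not follow from the $\calA$-stability of $D$. That hypothesis only says $\sum_{j\in Q}a_j>1$; it imposes no condition on proper subsets of $P$ or of $Q$. If $S\subseteq P$ satisfies $i\in S$, $|S|\ge 2$ and $\sum_{j\in S}a_j\le 1$, then Lemma \ref{lem:pull-back} on the factor $\Moduli_{g,P\cup\{\bullet\}}$ produces the correction $D(P\cup\{\bullet\}\smallsetminus S\,|\,S)$; since $S$ is disjoint from $Q$, its push-forward under $gl_D$ is the non-empty (hence non-zero) stratum with a genus-$g$ vertex carrying the two tails $S$ and $Q$, so it does not die. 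Concretely, for $\calA=\bigl(\tfrac25,\tfrac25,\tfrac35,\tfrac35\bigr)$ and the $\calA$-stable divisor $D=D(\{1,2\}\,|\,\{3,4\})$ one computes $c_\calA^\ast\psi_{1,\calA}\cdot D={gl_D}_\ast(\psi_1)-[\Delta]$, where $\Delta$ is the two-tails stratum; the analogous failure occurs for $i\in Q$ whenever a proper subset of $Q$ containing $i$ has total weight at most $1$. So the "final combinatorial verification" you defer cannot be completed in the stated generality: your heuristic that an unstable sub-tail "does not survive as an independent correction on the base" is exactly what fails. Any correct argument must either add the hypothesis that no $\calA$-unstable subset of $P$ (resp.\ of $Q$) contains $i$ — which is what would be needed to make the paper's "isomorphism onto its image" assertion literally true on the relevant factor — or else keep track of these correction strata rather than discarding them.
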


\begin{proof}
Let $\calA_P$ (resp. $\calA_{Q}$) denote the weight data corresponding to points in $P$ (resp. $Q$), and let $D_\calA$ be the push-forward of $D$ under $c_{\calA}$. Because $D$ is $\calA$-stable, the restriction $c_{\calA}\big\vert_{D}$ is an isomorphism onto its image, and the claim follows from the commutivity of the following diagram (where we assume $\bullet_\calA$ and $\star_\calA$ have weight 1).
\begin{center}
\begin{tikzcd}[row sep=2cm, column sep=1.5cm]
\Moduli_{g,P\cup\{\bullet\}}\times \Moduli_{0,Q\cup\{\star\}} \arrow{d}{gl_D} \arrow[r, "c_\calA\vert_D"] & \Moduli_{g,\calA_P\cup\{\bullet_\calA\}}\times \Moduli_{0,\calA_Q\cup\{\star_\calA\}} \arrow[d, "gl_{D_\calA}"] \\
\Moduli_{g,n} \arrow[r, "c_\calA"] & \Moduli_{g,\calA}
\end{tikzcd}
\end{center}
\end{proof}

\begin{lemma} \label{lem:falldownhassett}
Fix weight data $\calA = (a_1,\dots,a_n)$. Let $D = D(P|Q)$ be an $\calA$-unstable divisor of rational tails type on $\Moduli_{g,n}$, and suppose the $i$-th marked point is on the rational component ($i\in Q$). Then for any non-negative integer $k$ 
\begin{align}
c^*_{\calA}\psi_{i,\calA}^k\cdot D = {gl_{D}}_\ast (c^*_{\mathcal{B}} \psi^k_{\bullet,\mathcal{B}}),
\end{align}
where $c^*_{\mathcal{B}}\psi_{\bullet,\mathcal{B}}$ denotes the class pulled back from the projection $pr: \Moduli_{g, P \cup \{\bullet\}} \times \Moduli_{0, Q \cup \{\star\}} \to \Moduli_{g,P \cup \{\bullet\}}$, and $\mathcal{B} = (a_{i_1},\dots,a_{\ell}, s)$ with $s = \min\left(1,\sum_{i\in Q} a_i\right)$.
\end{lemma}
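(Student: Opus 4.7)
The strategy is to mirror the proof of Lemma \ref{lem:donothing}, but to accommodate the fact that $c_\calA|_D$ is no longer an isomorphism onto its image: the rational tail gets contracted, collapsing all marks of $Q$ to a single point. First I will establish a factorization of $c_\calA \circ gl_D$ through $\Moduli_{g, \calB}$ via an appropriate modular immersion $\iota$, then apply the projection formula and reduce the lemma to the pullback identity $\iota^* \psi_{i,\calA} = \psi_{\bullet, \calB}$.

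Because $D = D(P|Q)$ is $\calA$-unstable and of rational tails type, the rational tail (which has total weight $1 + \sum_{i \in Q} a_i$) fails to be $\calA$-stable, forcing $s := \sum_{i \in Q} a_i \leq 1$ and hence $s = \min\!\left(1, \sum_{i \in Q} a_i\right)$. The contraction $c_\calA$ collapses this tail and identifies all marks of $Q$ at the image of the node. The image of $c_\calA|_D$ is therefore the locus in $\Moduli_{g, \calA}$ where the marks of $Q$ coincide; since $s \leq 1$, this locus is $\calA$-stable and is parametrized by $\Moduli_{g,\calB}$ via the closed immersion
\[
\iota: \Moduli_{g, \calB} \to \Moduli_{g, \calA}, \qquad \left(C; (p_i)_{i \in P}, \bullet\right) \mapsto \left(C; (p_i)_{i \in P}, (p_j = \bullet)_{j \in Q}\right).
\]
This yields the commutative diagram
\[
\begin{tikzcd}[column sep=large, row sep=large]
\Moduli_{g, P \cup \{\bullet\}} \times \Moduli_{0, Q \cup \{\star\}} \arrow[r, "pr"] \arrow[d, "gl_D"'] & \Moduli_{g, P \cup \{\bullet\}} \arrow[r, "c_\calB"] & \Moduli_{g, \calB} \arrow[d, "\iota"] \\
\Moduli_{g, n} \arrow[rr, "c_\calA"'] & & \Moduli_{g, \calA}
\end{tikzcd}
\]
and applying the projection formula gives
\[
c_\calA^* \psi_{i, \calA}^k \cdot D = {gl_D}_* \!\left( pr^* c_\calB^* \iota^* \psi_{i, \calA}^k \right).
\]

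The remaining step is the pullback identity $\iota^* \psi_{i, \calA} = \psi_{\bullet, \calB}$ for $i \in Q$. The key is that $\iota$ is modular: the pullback of the universal $\calA$-family along $\iota$ is naturally identified with the universal $\calB$-family $\overline{\mathcal{U}}_{g, \calB} \to \Moduli_{g, \calB}$, and under this identification the tautological section $\sigma_{i, \calA}$ for $i \in Q$ pulls back to $\sigma_{\bullet, \calB}$, since in the image curve the $i$-th mark coincides with $\bullet$. This identification respects relative dualizing sheaves, so applying $c_1$ yields $\iota^* \psi_{i, \calA} = \psi_{\bullet, \calB}$, completing the proof. The main technical point is justifying that $\iota$ is modular in the stated sense, which amounts to observing that identifying the sections $(\sigma_j)_{j \in Q}$ of any $\calB$-stable family with $\sigma_\bullet$ produces an $\calA$-stable family; this is immediate from the definition of $\calA$-stability and the inequality $s \leq 1$.
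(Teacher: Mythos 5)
Your proposal is correct and follows essentially the same route as the paper: the same commutative diagram factoring $c_\calA\circ gl_D$ through $pr$, $c_\calB$, and the inclusion $\iota$ of the locus $\widetilde{D}\cong\Moduli_{g,\calB}$ where the marks of $Q$ coincide, followed by the projection formula. The only difference is that you spell out the modular justification of $\iota^*\psi_{i,\calA}=\psi_{\bullet,\calB}$, which the paper uses implicitly in its final equality.
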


\begin{proof}
Without loss of generality, let $i=n$. Fix a divisor $D(P|Q)$ of rational tails type on $\Moduli_{g,n}$ with $P = \{i_1,\dots,i_\ell\}$ and $Q = \{i_{\ell+1},\dots,i_{n}=n\}$, and suppose that $D(P|Q)$ is $\calA$-unstable. Let $s = \min\left(1,\sum_{i\in Q} a_i\right)$, define $\widetilde{D} := {c_{\calA}}(D) = \{[C;p_1,\dots,p_n] : p_{i_{\ell+1}} = \cdots = p_{i_n}\} \subset \Moduli_{g,\calA}$, and define weight data $\mathcal{B} = (a_{i_1},\dots,a_{\ell}, s)$.

Consider the following diagram ($\iota$ is inclusion):
\begin{center}
\begin{tikzcd}[row sep=1cm, column sep=2cm]
\Moduli_{g,P\cup\{\bullet\}}\times \Moduli_{0,Q\cup\{\star\}} \arrow{dd}{gl_D} \arrow[r, "pr"] & \Moduli_{g,P\cup\{\bullet\}} \arrow[d, "c_{\calB}"] \\
& \Moduli_{g,\mathcal{B}}\cong \widetilde{D} \arrow[d, "\iota"] \\
\Moduli_{g,n} \arrow[r, "c_\calA"] & \Moduli_{g,\calA}
\end{tikzcd}
\end{center}

By the commutativity of the diagram,
\begin{align*}
c^*_{\calA}\psi_{n,\calA}^k \cdot D(P|Q) &= {gl_{D}}_\ast gl_D^*(c^*_{\calA}\psi^k_{n,\calA} )\\
&= {gl_{D}}_\ast pr^*c^*_{\calB}\iota^*(\psi^k_{n,\calA}) \\
&= {gl_{D}}_\ast(c^*_{\calB}\psi^k_{\bullet,\calB}).
\end{align*}
\end{proof}


\begin{remark}
In order to streamline notation, when we write $\psi$-classes relative to some flag of a dual graph, we implicitly mean the push-forward via the appropriate gluing morphism of the pull-back via the projection to the factor hosting the flag of the corresponding class. See Figure \ref{fig:not} for an illustration.
\end{remark}

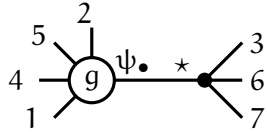
\begin{figure}[h]

\begin{tikzpicture}
\draw[very thick]  (0,0) circle (0.30);
\node at (0,0) {$g$};
\draw[very thick] (0.30,0) -- (1.5,0);
\fill (1.5,0) circle (0.10);
\draw[very thick] (1.5,0) -- (2,0);
\draw[very thick] (1.5,0) -- (2,0.5);
\draw[very thick] (1.5,0) -- (2,-0.5);
\node at (2.2,0) {$6$};
\node at (2.2,0.5) {$3$};
\node at (2.2,-0.5) {$7$};
\node at (0.55,0.25) {$\psi_\bullet$};
\node at (1.2,0.2) {$\star$};

\draw[very thick] (0,0.30) -- (0,0.7);
\node at (-0.1,0.9) {$2$};

\draw[very thick] (-0.21,0.21) -- (-0.50,0.5);
\node at (-0.7,0.6) {$5$};

\draw[very thick] (-0.30,0) -- (-.7,0);
\node at (-1,0) {$4$};

\draw[very thick] (-0.21,-0.21) -- (-.5,-.5);
\node at (-0.8,-.5) {$1$};

\end{tikzpicture}
\caption{The dual graph identifying the divisor $D = D(P\,|\,Q)$, with $P= \{1,2,4,5\}$ and $Q= \{ 3,6,7\}$. The graph is decorated with a $\psi$ class on a flag. This is shorthand for ${gl_{D}}_\ast p_1^\ast(\psi_\bullet)$.
}\label{fig:not}
\end{figure}

\section{Cycle Formula for Intersections}\label{sec:cycle}

Intersections of pull-backs of $\calA$-weighted $\psi$-classes have a highly combinatorial structure which relates them to classical $\psi$-class intersections. 
We define the strata in $R^*(\Moduli_{g,n})$ which support such intersections; throughout, we say a partition $\calP = \{P_1,\dots,P_r\}$ has $r$ \emphbf{parts} and has \emphbf{length} $\ell(\calP) = r$.


\begin{definition}\label{defpin}
Given a partition $\mathcal{P} = \{P_1,\dots,P_r\} \vdash [n]$, when $|P_i| = 1$  denote by $\bullet_i$ the element of the singleton $P_i$.  For $|P_i|>1$, introduce new labels $\bullet_i$ and $\star_i$. The \emphbf{pinwheel stratum} $\Delta_{\mathcal{P}}$ is the image of the gluing morphism
\begin{align}
gl_{\mathcal{P}}: \Moduli_{g, \{\bullet_1, \ldots, \bullet_r\}} \times \prod_{|P_i|>1} \Moduli_{0, \{\star_i\}\cup P_i} \to \Moduli_{g,n}
\end{align}
that glues together each $\bullet_i$ with $\star_i$.
Since  the general point of a pinwheel stratum represents a curve with no nontrivial automorphisms, the class of the stratum equals the push-forward of the fundamental class via $gl_{\mathcal{P}}$:
\begin{align}
\label{pinwheel}
[\Delta_{\mathcal{P}}] = gl_{\mathcal{P} \ast}([1]).
\end{align}
\end{definition}

\begin{example}
\label{ex:pinwheel}
Figure \ref{fig:pinwheel} shows  an example of the dual graph of a generic element of a pinwheel stratum. 

\begin{figure}[tb]

\begin{tikzpicture}

\draw[very thick]  (0,0) circle (0.30);
\node at (0,0) {$g$};

\draw[very thick] (0.30,0) -- (1.5,0);
\fill (1.5,0) circle (0.10);
\draw[very thick] (1.5,0) -- (2,0);
\draw[very thick] (1.5,0) -- (2,0.5);
\draw[very thick] (1.5,0) -- (2,-0.5);
\node at (2.2,0) {$6$};
\node at (2.2,0.5) {$3$};
\node at (2.2,-0.5) {$7$};
\node at (0.6,0.2) {$\bullet_4$};
\node at (1.2,0.2) {$\star_4$};

\draw[very thick] (0,0.30) -- (0,1.5);
\fill (0,1.5) circle (0.10);
\draw[very thick] (0,1.5) -- (-0.5,2);
\draw[very thick] (0,1.5) -- (0.5,2);
\node at (-0.5,2.2) {$2$};
\node at (0.5,2.2) {$5$};
\node at (-0.2, .6) {$\bullet_3$};
\node at (-0.2, 1.2) {$\star_3$};

\draw[very thick] (-0.30,0) -- (-.7,0);
\node at (-1.3,0) {$\bullet_2 = 4$};

\draw[very thick] (-0.21,-0.21) -- (-.5,-.5);
\node at (-1.1,-.5) {$\bullet_1 = 1$};

\end{tikzpicture}
\caption{The dual graph to the generic curve parameterized by the pinwheel stratum $\Delta_\mathcal{P}$, with $\mathcal{P} = \{\{1\},\{4\},\{2,5\},\{3,6,7\}\}$. The edges of the graph are decorated with auxiliary markings coming from the gluing morphism.}\label{fig:pinwheel}
\end{figure}
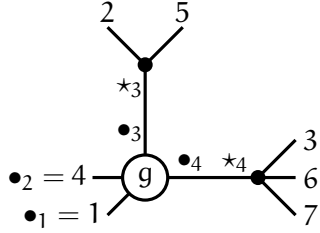
\end{example}


\begin{definition}\label{totun}
Fix weight data $\A = (a_1,\dots,a_n)$. A partition $\mathcal{P} = \{P_1,\dots,P_r\} \vdash [n]$ is \emphbf{$\A$-totally unstable} if $\DD\sum_{i\in P_j} a_i \leq 1$ for each part $P_j$. The set of all $\A$-totally unstable partitions is denoted $\mathfrak{P}_{\A}$.
\end{definition}



\begin{remark}
Equivalently, a partition $\mathcal{P}$ is $\calA$-totally unstable if the image of the corresponding pinwheel stratum via the contraction morphism lies in the interior of the Hassett space with weight data $\calA$: $$c(\Delta_\mathcal{P})\subset \mathcal{M}_{g, \calA}.$$
\end{remark}

If $\calA = 1^n$, then the only $\calA$-totally unstable partition is the singleton partition $\{\{1\},\dots,\{n\}\}$, and if $a_i\leq \frac{1}{n}$ for all $i$, then all partitions are $\calA$-totally unstable. The partial ordering on weight data is reversed for  the sets of unstable partitions: if $\calA \succeq \calB$, then $\mathfrak{P}_{\calA}\subseteq \mathfrak{P}_{\calB}$.

The intersection of weighted $\psi$-classes and pinwheel strata is analogous to that of weighted $\psi$-classes and divisors.

\begin{corollary} \label{cor:pinfall}
Fix weight data $\A = (a_1,\dots,a_n)$. For a pinwheel stratum $\Delta_\mathcal{P}$, if the $i$-th marked point is on a rational component which contracts under $c_{\calA}$, and without loss of generality $i\in P_1\in\calP$, then
\begin{align}
c^*_{\calA}\psi_{i,\calA}^k\cdot \Delta_\calP = {gl_{\calP}}_\ast (\psi^k_{\bullet_1}),
\end{align}
where $\psi_{\bullet_1}$ denotes the class pulled back from the projection 
\begin{align*}
pr:  \Moduli_{g, \{\bullet_1, \ldots, \bullet_r\}} \times \prod_{|P_i|>1} \Moduli_{0, \{\star_i\}\cup P_i} \to \Moduli_{g,\{\bullet_1,\dots,\bullet_r\}}.
\end{align*} Otherwise, if the $i$-th marked point is on a component which does not contract under $c_{\calA}$, then
\begin{align}
c^*_\calA\psi_{i,\calA}^k\cdot \Delta_\mathcal{P} = {gl_{\calP}}_\ast (\psi^k_{i}).
\end{align}
\end{corollary}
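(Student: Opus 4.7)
The proof is structurally parallel to those of Lemmas \ref{lem:donothing} and \ref{lem:falldownhassett}, with pinwheel strata playing the role of single rational tail divisors. The plan is to apply the projection formula to write
\begin{align*}
c_\calA^\ast \psi_{i,\calA}^k \cdot \Delta_\calP = {gl_{\calP}}_\ast\bigl(gl_\calP^\ast c_\calA^\ast \psi_{i,\calA}^k\bigr),
\end{align*}
and then analyze the composition $c_\calA \circ gl_\calP$ on the pinwheel domain by assembling a commutative diagram that factors it through a contraction morphism acting on the pinwheel factor which hosts the $i$-th mark.

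For the second case, when the $i$-th mark lies either on the genus $g$ component (so $\{i\} = P_j$ is a singleton part, with $i = \bullet_j$) or on a rational tail $\Moduli_{0, \{\star_j\}\cup P_j}$ that survives $c_\calA$, the composition $c_\calA \circ gl_\calP$ preserves the local structure of the curve around $p_i$: it may collapse \emph{other} unstable rational tails but not the component hosting the $i$-th mark. Mirroring the diagram in the proof of Lemma \ref{lem:donothing}, the pullback $gl_\calP^\ast c_\calA^\ast \psi_{i,\calA}^k$ coincides with $\psi_i^k$ pulled back from the projection to the factor of the pinwheel domain containing the $i$-th mark, producing ${gl_{\calP}}_\ast(\psi_i^k)$ as claimed.

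For the first case, when the $i$-th mark lies on a rational tail $P_1$ that contracts under $c_\calA$, the composition $c_\calA \circ gl_\calP$ sends $p_i$ to the smooth point $\bullet_1$ on the genus $g$ component. I would then imitate the proof of Lemma \ref{lem:falldownhassett} directly: introduce modified weight data $\calB$ on $\{\bullet_1,\ldots, \bullet_r\}$ which assigns to $\bullet_1$ the weight $\min(1, \sum_{j\in P_1} a_j)$, and analogously adjusts the attachment weights for any other contracting tails. The $\calA$-weighted cotangent line at $p_i$ pulls back, through the intermediate Hassett space $\Moduli_{g,\calB}$, to the class $\psi_{\bullet_1}^k$ on the first pinwheel factor, and chasing through the resulting diagram yields ${gl_{\calP}}_\ast(\psi_{\bullet_1}^k)$.

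The main technical point is tracking the image $c_\calA(\Delta_\calP)$ in $\Moduli_{g,\calA}$: in general, this is not a pinwheel stratum of the same combinatorial type as $\calP$, since several rational tails may collapse simultaneously. However, because $\psi_{i,\calA}$ is determined entirely by the local geometry at $p_i$, only the single tail (or lack thereof) hosting the $i$-th mark is relevant to the computation, and so each case reduces to a direct analogue of the single-divisor argument already established in the previous two lemmas.
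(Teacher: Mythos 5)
Your proposal is correct and takes essentially the same approach as the paper, whose entire proof is the one-line observation that both cases follow from Lemma \ref{lem:donothing} and Lemma \ref{lem:falldownhassett}. The details you supply --- the projection formula, the factorization of $c_{\calA}\circ gl_{\calP}$ through the factor hosting the $i$-th mark, and the remark that the collapsing of the other tails is irrelevant to the cotangent line at $p_i$ --- are precisely the bookkeeping the paper leaves implicit.
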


\begin{proof}
Both statements follow  from Lemma \ref{lem:donothing} and Lemma \ref{lem:falldownhassett}.
\end{proof}

We are now ready to state our first main theorem, which generalizes the result in \cite{bcomega} to arbitrary weight data.


\begin{theorem}
\label{thm:hassettcycle}
Fix $g$ and $n$ and weight data $\calA = (a_1,\dots,a_n)$. For $1\leq i \leq n$, let $k_i$ be a non-negative integer, and let $\DD K = \sum_{i=1}^n k_i$. For a partition $\mathcal{P}  = \{P_1,\dots, P_r \} \vdash [n]$, define $\DD \alpha_j := \sum_{i\in P_j} k_i$. Then the following formula holds in  $R^{K}(\Moduli_{g,n})$:
\begin{align}
\label{eq:hassettfor}
c^*_{\A}\left(\prod_{i=1}^n \psi_{i,\calA}^{k_i}\right) = \sum_{\mathcal{P}\in \mathfrak{P}_{\A}}[\Delta_{\mathcal{P}}] \prod_{|P_j| = 1}\psi_{\bullet_j}^{\alpha_j} \prod_{|P_j| > 1} \frac{\psi_{\bullet_j}^{\alpha_j}- (-\psi_{\star_j})^{\alpha_j}}{-\psi_{\bullet_j}-\psi_{\star_j}}.
\end{align}
\end{theorem}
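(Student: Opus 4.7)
I prove the formula by induction on $K = \sum_i k_i$. The base case $K=0$ is immediate: every partition with a part of size at least $2$ has some $\alpha_j = 0$, making the factor $\tfrac{\psi_{\bullet_j}^0-(-\psi_{\star_j})^0}{-\psi_{\bullet_j}-\psi_{\star_j}}=0$, so only the singleton partition contributes and gives $[\Moduli_{g,n}]$ with trivial decoration, matching $c_\calA^\ast(1)$.

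For the inductive step, pick $i_0$ with $k_{i_0}\geq 1$ and invoke Lemma \ref{lem:pull-back} to write $c_\calA^\ast\psi_{i_0,\calA} = \psi_{i_0}-D_{i_0}$, where $D_{i_0}$ is the sum of $\calA$-unstable rational-tail divisors containing $i_0$. Apply the induction hypothesis to $c_\calA^\ast(\prod_i \psi_{i,\calA}^{k_i-\delta_{i,i_0}})$ and distribute $(\psi_{i_0}-D_{i_0})$ across each pinwheel contribution $[\Delta_\mathcal{P}]\cdot \theta^{(K-1)}_\mathcal{P}$. When $\{i_0\}$ is a singleton part of $\mathcal{P}$, the projection formula directly produces the target decoration $\theta^{(K)}_\mathcal{P}$. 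When $i_0$ lies in a part $P_j$ with $|P_j|>1$, three pieces act on $[\Delta_\mathcal{P}]$: the self-intersection $-D_{P_j}\cdot[\Delta_\mathcal{P}]=[\Delta_\mathcal{P}](\psi_{\bullet_j}+\psi_{\star_j})$, the restriction of $\psi_{i_0}$ to the rational-tail factor $\Moduli_{0,P_j\cup\{\star_j\}}$, and (via Lemma \ref{lem:psisums}) the decomposition $\psi_{i_0}=-\psi_{\star_j}+\sum D(P'|Q')$. The polynomial identities
\begin{align*}
(\psi_{\bullet_j}+\psi_{\star_j})\cdot\theta^{(K-1)}_{P_j} &= -\psi_{\bullet_j}^{\alpha_j-1}+(-\psi_{\star_j})^{\alpha_j-1},\\
-\psi_{\star_j}\cdot \theta^{(K-1)}_{P_j} &= \theta^{(K)}_{P_j}+\psi_{\bullet_j}^{\alpha_j-1}
\end{align*}
combine to give an on-stratum contribution $\theta^{(K)}_{P_j}+(-\psi_{\star_j})^{\alpha_j-1}$, which differs from the target $\theta^{(K)}_{P_j}$ by $(-\psi_{\star_j})^{\alpha_j-1}$.

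This discrepancy must be absorbed by the remaining contributions: the boundary divisors $\sum D(P'|Q')$ in Lemma \ref{lem:psisums} and the intersections $-D_Q\cdot[\Delta_\mathcal{P}]$ for $Q \neq P_j$ (with $Q$ either strictly nested in $P_j$ or strictly containing $P_j$). These produce chain-type classes that, after reorganization via Chow relations on rational-tail factors, cancel the discrepancy on $[\Delta_\mathcal{P}]$ and reassemble into the pinwheel contributions for partitions $\mathcal{P}'\in\mathfrak{P}_\calA$ strictly refining $\mathcal{P}$; the $\calA$-totally unstable condition selects exactly the admissible refinements.

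\textbf{Main obstacle.} The central difficulty is the Chow-level accounting in the closing step: chain-type cycles arising in intermediate computations are in general only cohomologically equivalent (not literally equal) to the pinwheel strata they must combine with, and the equivalences rely on iterated application of Lemma \ref{lem:psisums} on rational-tail factors. In the simplest nontrivial case $g=1$, $n=3$, $k_1=2$, this reduces to the identity $\psi_1\cdot D_{\{1,2,3\}} + D_{\{1,2,3\}}\cdot\psi_{\star} = X_1 + X_2$ in $R^2(\Moduli_{1,3})$, where $X_1$ and $X_2$ are codim-$2$ chain-stratum classes; systematizing this Chow-equivalence bookkeeping uniformly across all $\mathcal{P}\in\mathfrak{P}_\calA$ and all admissible $\calA$ is the combinatorial heart of the proof.
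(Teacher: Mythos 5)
Your induction set-up and your on-stratum bookkeeping are correct (the base case $K=0$, the splitting $c_\calA^\ast\psi_{i_0,\calA}=\psi_{i_0}-D_{i_0}$, the excess-intersection term $-D_{P_j}\cdot[\Delta_\calP]=[\Delta_\calP](\psi_{\bullet_j}+\psi_{\star_j})$, and the two polynomial identities all check out, as does the resulting discrepancy $(-\psi_{\star_j})^{\alpha_j-1}$). But the proof has a genuine gap exactly where you flag the ``main obstacle'': the claim that the remaining chain-type classes cancel this discrepancy and reassemble into decorated pinwheel strata is asserted, not proved, and it is the entire content of the theorem beyond Lemma \ref{lem:pull-back}. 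Worse, the route you chose makes this step harder than it needs to be. Because you first expand the degree-$(K-1)$ product into pinwheel strata and only then intersect with $D_{i_0}$, you must control $D(A\,|\,B)\cdot[\Delta_\calP]$ for \emph{every} unstable $B\ni i_0$; when $B$ is a union of several parts of $\calP$, this produces strata with an intermediate rational vertex carrying several tails. These are not simple chains, they do not appear in the target formula, and they do not correspond to partitions ``strictly refining $\calP$'' (if anything they record a merging of parts), so the final sentence of your reassembly claim mislabels what has to cancel. You would need an additional argument showing that all such multi-tail contributions cancel among themselves, and then that the genuine chains (from $B\subsetneq P_j$ and from Lemma \ref{lem:psisums} on the tail factor) sum to exactly $(-\psi_{\star_j})^{\alpha_j-1}$ times the remaining decoration on each $[\Delta_\calP]$.

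The paper's proof is organized precisely to avoid this explosion. It inducts on the pair $(n,K)$ in lexicographic order, with base case $K=1$. In the inductive step the divisor term is \emph{not} distributed over the expanded formula: instead $c_\calA^\ast\bigl(\prod\psi_{i,\calA}^{k_i}\bigr)\cdot D(A\,|\,B)$ is computed by Lemma \ref{lem:falldownhassett}, which pushes the whole monomial onto $D(A\,|\,B)$ and rewrites it as a pull-back from a Hassett space with fewer marks; the induction hypothesis in $n$ then expands it into pinwheel strata \emph{of the divisor}. The only depth-two strata that ever appear are single chains (genus-$g$ vertex, one intermediate rational vertex, one tail), and Lemma \ref{lem:psisums} converts the sum of all such chains into $[\Delta_\calP]\cdot(\psi_1+\psi_{\star_1})$ in one stroke, after which the cancellation against the $[\Delta_\calP]\cdot\psi_1$ term is the elementary identity you already wrote down. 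If you want to salvage your single-variable induction on $K$, you would have to supply the missing cancellation of the multi-tail strata and the exact matching of the chain contributions with the per-stratum discrepancy; adopting the auxiliary induction on $n$ together with Lemma \ref{lem:falldownhassett} is the cleaner repair.
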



\begin{remark} \label{rem:ratfun}
In formula \eqref{eq:hassettfor},  the rational function is really a polynomial:
\begin{equation}
\label{eq:ratfun1}
\frac{\psi_{\bullet_j}^{\alpha_j}- (-\psi_{\star_j})^{\alpha_j}}{-\psi_{\bullet_j}-\psi_{\star_j}} = - \psi_{\bullet_j}^{\alpha_j-1}+ \psi_{\bullet_j}^{\alpha_j-2} \psi_{\star_j} - \psi_{\bullet_j}^{\alpha_j-3} \psi_{\star_j}^{2}+ \dots - (-\psi_{\star_j})^{\alpha_j-1}.
\end{equation}
We also observe that if $\alpha_j=0$, the expression in \eqref{eq:ratfun1}  equals $0$. Hence  the formula is supported on pinwheel strata where each rational tail has at least one point $i$ with strictly positive $k_i$.
\end{remark}

\begin{proof}
The proof of Theorem \ref{thm:hassettcycle} proceeds by induction and entails plenty of careful bookkeeping. We have placed in Appendix \ref{hacyth} for the reader interested in all details.
\end{proof}

By restricting our attention to top-dimensional intersections, we recover as a corollary to Theorem \ref{thm:hassettcycle} the following numerical result originally shown in \cite{alexeevguy}.


\begin{corollary}
\label{cor:numerical}
For $1\leq i \leq n$, let $k_i$ be a non-negative integer, and let $\DD \sum_{i=1}^n k_i = 3g-3+n$. Then
\begin{align} \label{eq:numerical}
\int_{\Moduli_{g,\A}}\prod_{i=1}^n \psi_{i,\calA}^{k_i} = \sum_{\mathcal{P}\in\mathfrak{P}_{\A}}(-1)^{n+\ell(\mathcal{P})} \int_{\Moduli_{g,\ell(\mathcal{P})}}\prod_{i=1}^{\ell(\mathcal{P})} \psi_{i}^{\alpha_i-|P_i|+1}
\end{align}
\end{corollary}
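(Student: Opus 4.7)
The plan is to obtain the numerical identity by pushing the cycle-level Theorem \ref{thm:hassettcycle} forward to a point. The contraction morphism $c_\calA:\Moduli_{g,n}\to\Moduli_{g,\calA}$ is proper and birational, so by the projection formula
\begin{align*}
\int_{\Moduli_{g,\calA}}\prod_{i=1}^n\psi_{i,\calA}^{k_i} = \int_{\Moduli_{g,n}} c^*_{\calA}\left(\prod_{i=1}^n\psi_{i,\calA}^{k_i}\right),
\end{align*}
and the right-hand side is the sum, over $\mathcal{P}\in\mathfrak{P}_\calA$, of the pushforward of the fundamental class of $\Moduli_{g,\{\bullet_1,\ldots,\bullet_r\}}\times\prod_{|P_j|>1}\Moduli_{0,\{\star_j\}\cup P_j}$ decorated as in \eqref{eq:hassettfor}.

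Next I would pull the decorations back along the gluing $gl_\mathcal{P}$, at which point the integral over the product splits as a product of integrals over the factors. On each rational tail $\Moduli_{0,\{\star_j\}\cup P_j}$ of dimension $|P_j|-2$, I need to extract the coefficient of $\psi_{\star_j}^{|P_j|-2}$ from the polynomial expression of Remark \ref{rem:ratfun}, namely
\begin{align*}
\frac{\psi_{\bullet_j}^{\alpha_j}-(-\psi_{\star_j})^{\alpha_j}}{-\psi_{\bullet_j}-\psi_{\star_j}} = \sum_{a+b=\alpha_j-1}(-1)^{b+1}\psi_{\bullet_j}^{a}\psi_{\star_j}^{b}.
\end{align*}
Setting $b=|P_j|-2$ forces $a=\alpha_j-|P_j|+1$ and contributes the sign $(-1)^{|P_j|-1}=(-1)^{|P_j|+1}$, while for a singleton part one reads directly the factor $\psi_{\bullet_j}^{\alpha_j}=\psi_{\bullet_j}^{\alpha_j-|P_j|+1}$ (the sign $(-1)^{|P_j|+1}=1$ is also consistent). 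A dimension check confirms $\sum_{j=1}^{r}(\alpha_j-|P_j|+1)=K-n+r=3g-3+r=\dim\Moduli_{g,r}$, so what remains is precisely the integral on $\Moduli_{g,\ell(\mathcal{P})}$ appearing on the right-hand side of \eqref{eq:numerical}.

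Finally I would collect the signs: the total sign for a partition $\mathcal{P}$ with $r=\ell(\mathcal{P})$ parts is
\begin{align*}
\prod_{j=1}^{r}(-1)^{|P_j|+1} = (-1)^{\sum_j |P_j|+r} = (-1)^{n+\ell(\mathcal{P})},
\end{align*}
which matches the sign in \eqref{eq:numerical}. No step here is genuinely difficult; the only subtlety is the uniform treatment of singleton and non-singleton parts in the sign/exponent bookkeeping, but this is precisely the content of Remark \ref{rem:ratfun} (with $\alpha_j=0$ killing the corresponding summand) combined with the observation that the exponent $\alpha_j-|P_j|+1$ reduces to $\alpha_j$ when $|P_j|=1$.
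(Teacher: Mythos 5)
Your proposal is correct and follows essentially the same route as the paper: both apply Theorem \ref{thm:hassettcycle}, observe that dimension constraints on each rational tail $\Moduli_{0,\{\star_j\}\cup P_j}$ single out the monomial $(-1)^{|P_j|-1}\psi_{\bullet_j}^{\alpha_j-|P_j|+1}\psi_{\star_j}^{|P_j|-2}$, and evaluate the tail factors via $\int_{\Moduli_{0,m}}\psi_i^{m-3}=1$ (the string equation, which you should cite explicitly to justify that extracting the coefficient of $\psi_{\star_j}^{|P_j|-2}$ is the whole story), with the signs multiplying to $(-1)^{n+\ell(\mathcal{P})}$. Your bookkeeping of exponents, signs, and the dimension check all match the paper's argument.
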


\begin{proof}
This statement follows from formula \eqref{eq:hassettfor} by observing which monomials are of top degree and then pushing forward from $\Moduli_{g,n}$ to $\Moduli_{g,\A}$.
\begin{itemize}
\item For any partition $\mathcal{P}$, for dimension reasons the only monomial that has nonzero evaluation on $[\Delta_{\mathcal{P}}]$ is
\begin{align}\label{ha}
\prod_{|P_i|=1} \psi_{\bullet_i}^{\alpha_i}\prod_{|P_i|>1}(-1)^{|P_i|-1}\psi_{\bullet_i}^{\alpha_i-|P_i|+1}\psi_{\star_i}^{|P_i|-2 }
\end{align}
\item For any $n\geq 3$ and $i\in [n]$, 
\begin{align*}
\int_{\Moduli_{0,n}} \psi_i^{n-3}  = 1
\end{align*}
by the string equation; hence all evaluations for the classes $\psi_{\star_i}$ in \eqref{ha} contribute a factor of one to the evaluation of the monomial on $[\Delta_{\mathcal{P}}]$.
\end{itemize}
\end{proof}

\begin{remark}\label{rem:generalcycle}
It is worth observing that natural generalizations of Theorem \ref{thm:hassettcycle} and Corollary \ref{cor:numerical} hold as well: if instead of pulling back along $c_{\calA}$ to $\Moduli_{g,n}$ we pull back to some intermediate $\Moduli_{g,\calB}$ along $r_{\calA,\calB}$, the only change is that the sums in \eqref{eq:hassettfor} and \eqref{eq:numerical} are over 
\begin{align*}
\left(\mathfrak{P}_{\calA} \backslash \mathfrak{P}_{\calB}\right) \cup \{\{1\},\dots,\{n\}\},
\end{align*}
i.e., non-singleton $\calA$-totally unstable partitions which are not also $\calB$-totally unstable.
The proof of this more general statement is essentially identical to the ones given above, but the notation becomes significantly more tedious. One benefit of the generalized statements is that they exhibit the $\psi$-class relations as a wall-crossing phenomenon in $((0,1]\cap\mathbb{Q})^n$, the parameter space for weight data.
\end{remark}

\section{Hassett Potentials and Operators}\label{sec:potential}

A common way to encode intersection data is via generating functions, or potentials. In this way recursions among intersection numbers and relations between different families of intersection numbers may be expressed via differential operators.

\begin{definition}\label{def:taunotation}
For any weight data $\A = (a_1,\dots,a_n)$ and genus $g$, define \emphbf{correlation functions} as
\begin{align}
\langle \tau_{a_1,k_1}\cdots\tau_{a_n,k_n}\rangle_{g} := \int _{\Moduli_{g,\A}} \prod_{i=1}^n  \psi_{i,\calA}^{k_i}.
\end{align}
In order to deal simultaneously with all intersection numbers on Hassett spaces with points having weights valued in a finite set $\underline{\calA} = \{a_1,\ldots,a_n\} $,
we adopt the following multi-index notation: given $\bm{b} = (b_{1,0},b_{2,0},\dots,b_{n,\ell})$
\begin{align}
\langle \bm{\tau^b}_{\underline\A} \rangle_g := \langle (\underbrace{\tau_{a_1,0}\cdots\tau_{a_1,0}}_{b_{1,0}\text{ factors}})\cdots (\underbrace{\tau_{a_n,0}\cdots\tau_{a_n,0}}_{b_{n,0}\text{ factors}}) \cdots  (\underbrace{\tau_{a_1,\ell}\cdots\tau_{a_1,\ell}}_{b_{1,\ell}\text{ factors}})\cdots (\underbrace{\tau_{a_n,\ell}\cdots\tau_{a_n,\ell}}_{b_{n,\ell}\text{ factors}}) \rangle_g.
\end{align}
If $2g - 2 + \sum a_i \leq 0$, we define the integral to be zero.
\end{definition}

\begin{definition}\label{def:hassettpotential}
Define a countable set of variables 
\begin{equation}
\bm{t} = \bigcup_{a \in (0,1]\cap\mathbb{Q}, k\in \mathbb{Z}^{\geq 0}}\{ t_{a,k}\};
\end{equation}
expanding the exponential function and using the same multi-index notation as above, we define the \emphbf{Hassett potential of genus $g$} to be
\begin{align} \label{hpg}
\calH_g(\bm{t}) := \left\langle exp\left(\sum_{a,k} t_{a,k}\tau_{a,k}\right) \right\rangle_g.
\end{align}
 The \emphbf{Hassett potential} is then
\begin{align}
\calH(\lambda;\bm{t}) := \sum_{g=0}^\infty \lambda^{2g-2}\calH_g(\bm{t}).
\end{align}
\end{definition}

Slightly more concretely, for any finite set $\underline \A = (a_1,\dots,a_n)\subset  ((0,1]\cap\mathbb{Q})^n$, denote
\begin{equation}
\bm{t}_{\underline\A} = \bigcup_{a_i\in \underline\calA, k\in \mathbb{Z}^{\geq 0}}\{ t_{a_i,k}\};
\end{equation}  then \eqref{hpg} means
\begin{align}
\calH_g(\bm{t}) = \sum_{\underline\A,\;\bm{b}}\frac{\bm{t^b}_{\A}}{\bm{b}!} \langle \bm{\tau^b}_{\underline\A} \rangle_g,
\end{align}
with $\bm{b}! = b_{1,0}!b_{1,1}!\cdots b_{n,\ell}!$.

\begin{example}
The monomial $\lambda^4t_{\frac{1}{2},5}\frac{t_{\frac{1}{3},2}^2}{2!}$ in $\calH(\lambda;\bm{t})$ has coefficient
\begin{align*}
\int_{\Moduli_{3,\calA}} \psi_{1,\calA}^5\psi_{2,\calA}^2\psi_{3,\calA}^2,
\end{align*}
where $\calA = \left(\frac{1}{2},\frac{1}{3},\frac{1}{3}\right)$ and $\underline\calA = \left(\frac{1}{2},\frac{1}{3} \right)$.
\end{example}





\begin{definition}\label{def:gwpotential}
Let $\bm{x} = (x_0,x_1,x_2,\dots)$ be an infinite sequence of formal variables. The \emphbf{Witten potential} or \emphbf{Gromov-Witten potential of a point}, denoted $\calF(\lambda;\bm{x})$ is the Hassett potential restricted to weight data of the form $\calA= 1^n$ for any $n$. Equivalently, $\calF(\lambda;\bm{x})$ is the Hassett potential with the substitution
\begin{align*}
t_{a,k} \mapsto \left\{\begin{array}{lr} 0, & \text{ if }a \neq 1 \\ x_k, & \text{ if } a=1\end{array}\right. 
\end{align*}
\end{definition}

 The Hassett potential is related to the Witten potential as prescribed by Corollary \ref{cor:numerical}. We now describe this connection in the language of generating functions via the following differential operator.

\begin{definition}\label{def:fork}
Let $\vardel_x := \frac{\vardel}{\vardel x}$. The \emphbf{fork operator} is
\begin{align}
\mathcal{L} &:= \sum_{n=1}^\infty \frac{(-1)^{n-1}}{n!}\sum_{\substack{(a_1,k_1),\dots,(a_n,k_n) \\ \sum a_j \leq 1}} t_{a_1,k_1}\cdots t_{a_n,k_n} \vardel_{x_{k_1+\cdots +k_n - (n-1)}}.
\end{align}
\end{definition}

The exponential of this operator relates relates the Witten potential to the Hassett potential.

\begin{theorem}\label{thm:wittentohassett}
Let $\mathcal{H}(\lambda;\bm{t})$, $\calF(\lambda;\bm{x})$, and $\calL$ be as above. Then
\begin{align} \label{eq:expop}
e^{\calL}\calF(\lambda;\bm{x})\big\vert_{\bm{x} = \bm{0}} = \mathcal{H}(\lambda;\bm{t}) + U,
\end{align}
where $U$ is supported on monomials $\prod_i t_{a_i,k_i}$  with $2g-2+\sum a_i \leq 0$.
\end{theorem}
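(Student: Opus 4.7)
The plan is to exploit a key structural property of the fork operator: writing $\calL = \sum_k y_k(\bm t)\,\vardel_{x_k}$, the coefficients $y_k(\bm t)$ depend only on $\bm t$ and so commute with every $\vardel_{x_j}$. Consequently $\calL^m = \sum_{k_1,\dots,k_m} y_{k_1}\cdots y_{k_m}\,\vardel_{x_{k_1}}\cdots\vardel_{x_{k_m}}$, and $e^\calL$ acts on $\calF(\lambda;\bm x)$ as the translation $\bm x \mapsto \bm x + \bm y(\bm t)$. Evaluating at $\bm x = \bm 0$ reduces the left-hand side of \eqref{eq:expop} to $\calF(\lambda;\bm y(\bm t))$, where, by collecting terms in the definition of $\calL$ by the $\vardel_{x_k}$ they contain,
\begin{align*}
y_k(\bm t) = \sum_{n \geq 1}\frac{(-1)^{n-1}}{n!}\sum_{\substack{(a_1,k_1),\dots,(a_n,k_n)\\ \sum a_j \leq 1,\ \sum k_j - (n-1) = k}} t_{a_1,k_1}\cdots t_{a_n,k_n}.
\end{align*}

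The next step is to expand $\calF(\lambda;\bm y(\bm t))$ and match it coefficient-by-coefficient with $\calH(\lambda;\bm t)$. First I would fix weight data $\calA = (a_1,\dots,a_N)$ and exponents $(k_1,\dots,k_N)$ and compare the coefficient of $\prod_i t_{a_i,k_i}$ on both sides. In $\calF(\lambda;\bm y)$ such a coefficient arises from choosing a number of marks $n$ for the underlying $\Moduli_{g,n}$ together with a partition of $[N]$ into parts $Q_1,\dots,Q_n$ specifying which $t$-variables are absorbed into each $y_{k_i}$; the value of $k_i$ is forced to be $\alpha_i - |Q_i| + 1$ with $\alpha_i = \sum_{j \in Q_i} k_j$. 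The weight constraint $\sum_{j\in Q_i} a_j \leq 1$ built into $\calL$ is exactly the $\calA$-totally unstable condition on the partition $\calP = \{Q_1,\dots,Q_n\}$. Tallying the symmetry factors --- $\tfrac{1}{n!}$ from $\calF$, $\tfrac{(-1)^{n_i-1}}{n_i!}$ from each $y_{k_i}$, the $n!$ ways to label the parts, and the $n_i!$ internal orderings within each part --- yields
\begin{align*}
\sum_{\calP \in \mathfrak{P}_\calA}(-1)^{N+\ell(\calP)}\int_{\Moduli_{g,\ell(\calP)}}\prod_j \psi_j^{\alpha_j - |P_j| + 1},
\end{align*}
which by Corollary \ref{cor:numerical} is precisely the coefficient of $\prod_i t_{a_i,k_i}$ in $\calH(\lambda;\bm t)$ whenever $2g-2+\sum a_i > 0$.

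This establishes equality of coefficients in the stable Hassett range. For monomials with $2g-2+\sum a_i \leq 0$ the Hassett integral is defined to be zero by convention, while the LHS can still receive contributions from those $\calP$ for which $\Moduli_{g,\ell(\calP)}$ remains stable; these collect into the correction term $U$, giving the claimed identity. The main obstacle I anticipate is combinatorial bookkeeping: verifying carefully that the $\tfrac{1}{n!}$ and $\tfrac{1}{n_i!}$ symmetry factors conspire with the ordering counts to produce exactly the signed sum over $\mathfrak{P}_\calA$. A secondary subtlety arises when the target monomial has repeated $(a,k)$ entries, since then additional multiplicity factors appear on both sides that must be reconciled; the cleanest workaround is to perform the count with formally distinguishable $t$-variables and specialize only at the end.
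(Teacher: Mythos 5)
Your proposal is correct and follows essentially the same route as the paper: both reduce the identity to a coefficient-by-coefficient match that is supplied by Corollary \ref{cor:numerical}, with the sum over $\calA$-totally unstable partitions emerging from the weight constraint in the fork operator and the signs $(-1)^{N+\ell(\calP)}$ coming from the $(-1)^{n-1}$ factors. Your preliminary step of rewriting $e^{\calL}\calF\big\vert_{\bm{x}=\bm{0}}$ as $\calF(\lambda;\bm{y}(\bm{t}))$ via the translation action of the vector field is only a reorganization --- the paper records the same fact separately as Lemma \ref{lem:expflow} and instead pairs operator terms $L_{\underline{J}}$ directly with monomials $m_\alpha$ of $\calF$, but the combinatorial bookkeeping (including the multiplicities for repeated $(a,k)$ entries) is the same.
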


\begin{proof}
This theorem is proved by checking that the equality of coefficients for each monomial in \eqref{eq:expop} follows from the formulas in Theorem \ref{thm:hassettcycle}. The details are in Appendix \ref{proofofwth}.
\end{proof}

\begin{remark}
When defining Hassett spaces in Section \ref{sec:hassettbackground}, we noted that $\Moduli_{g,\calA}$ is a non-empty, smooth, proper DM stack when $2g-2+\sum a_i > 0$. The coefficients appearing in $U$ ``would like to be'' intersection numbers on  Artin stacks of rational curves with weights whose sum is less than or equal to 2. We do not know if such an interpretation is possible.
\end{remark}

\begin{remark}\label{rem:generalwittentohassett}
In Remark \ref{rem:generalcycle} we noted that Corollary \ref{cor:numerical} generalizes in such a way as to explicitly connect two Hassett spaces of $\Moduli_{g,\calB}\to\Moduli_{g,\calA}$, without requiring $\calB = 1^n$. In complete analogy, the following statement can be seen as a generalization of Theorem \ref{thm:wittentohassett}.
Define
\begin{align}\label{def:generalfork}
\tilde{\calL} &:= \sum_{b \geq a} t_{a,i}\vardel x_{b,i} + \sum_{n\geq 2}^\infty \frac{(-1)^{n-1}}{n!}\sum_{\substack{(a_1,i_1),\dots,(a_n,i_n) \\ \sum a_j \leq 1}} t_{a_1,i_1}\cdots t_{a_n,i_n} \vardel x_{1,i_1+\cdots +i_n - (n-1)}.
\end{align}
Then
\begin{align}\label{generalexpop}
e^{\tilde{\calL}}\mathcal{H}(\lambda;{\bm{x}})\big\vert_{{\bm{x}} = \bm{0}} = \mathcal{H}(\lambda;\bm{t}) + U.
\end{align}
\end{remark}

\section{Changes of Variables}\label{sec:cov}

Observing that  $\calL$ is a vector field,  $e^{\calL}$ describes a flow along that vector field. Thus we have the following lemma.

\begin{lemma} \label{lem:expflow}
Rewrite the fork operator as
\begin{align} \label{eq:altoperator}
\mathcal{L} &= \sum_{i=0}^\infty f_i(\bm{t})\; \vardel_{x_{i}} \nonumber \\
&= \bm{f}(\bm{t})\; \vardel_{\bm{x}}.
\end{align}
Then
\begin{align} \label{eq:expflow}
\calH(\lambda;\bm{t}) + U &= e^{\mathcal{L}}\calF(\lambda;\bm{x})\big\vert_{\bm{x} = \bm{0}} \nonumber \\ \nonumber
&= \calF(\lambda;\bm{x} + \bm{f}(\bm{t}))\big\vert_{\bm{x} = \bm{0}} \\
&= \calF(\lambda; \bm{f}(\bm{t})).
\end{align}
\hfill $\square$
\end{lemma}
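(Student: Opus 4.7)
The plan is to exploit the fact that $\mathcal{L}$, rewritten in the form \eqref{eq:altoperator}, is a vector field whose coefficient functions $f_i(\bm{t})$ depend only on the $\bm{t}$ variables and not on the $\bm{x}$ variables. Consequently, multiplication by $f_i(\bm{t})$ commutes with every $\vardel_{x_j}$, and of course the $\vardel_{x_j}$ all commute among themselves. This means $\mathcal{L}$ acts on any formal power series in $\bm{x}$ (with coefficients in $\mathbb{Q}\llbracket \bm{t}\rrbracket[\lambda,\lambda^{-1}]$) as a constant-coefficient (in $\bm{x}$) linear combination of translation generators.

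First I would verify the commutativity claim carefully: for any $i,j$ and any formal power series $g(\bm{x})$,
\begin{align*}
[f_i(\bm{t})\vardel_{x_i},\, f_j(\bm{t})\vardel_{x_j}]\,g = 0,
\end{align*}
since $\vardel_{x_i}f_j(\bm{t}) = 0$. Thus the exponential $e^{\mathcal{L}}$ may be computed term by term without any Baker--Campbell--Hausdorff corrections. Next I would invoke (formal) Taylor expansion in several variables: for any formal power series $g(\bm{x})$,
\begin{align*}
g(\bm{x}+\bm{a}) = \sum_{k=0}^{\infty}\frac{1}{k!}\bigl(\bm{a}\cdot \vardel_{\bm{x}}\bigr)^{k} g(\bm{x}),
\end{align*}
an identity which holds coefficient-wise in the $\bm{x}$-adic topology. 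Applying this with $\bm{a} = \bm{f}(\bm{t})$ and $g = \calF(\lambda;\,\cdot\,)$ gives $e^{\mathcal{L}}\calF(\lambda;\bm{x}) = \calF(\lambda;\bm{x}+\bm{f}(\bm{t}))$.

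Finally I would specialize $\bm{x}=\bm{0}$ to obtain $\calF(\lambda;\bm{f}(\bm{t}))$, matching the right-hand side of \eqref{eq:expflow}. The left-hand equality is exactly the content of Theorem \ref{thm:wittentohassett} (the equation \eqref{eq:expop}). Combining these two equalities yields the lemma.

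The only mildly delicate step is the formal Taylor expansion: one must check that the doubly-infinite sum over $k$ and over multi-indices makes sense. This is straightforward because each $f_i(\bm{t})$ has no constant term (every summand in $\mathcal{L}$ contains at least one $t_{a,k}$), so iterated applications of $\mathcal{L}$ raise the total $\bm{t}$-degree; thus the coefficient of any fixed monomial $\bm{t}^{\bm{b}}\lambda^{2g-2}$ in $e^{\mathcal{L}}\calF(\lambda;\bm{x})|_{\bm{x}=\bm{0}}$ involves only finitely many terms. Once this finiteness is noted, the manipulation is formal and the three equalities in \eqref{eq:expflow} follow directly.
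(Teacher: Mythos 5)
Your proposal is correct and takes essentially the same route as the paper, which simply observes that $\mathcal{L}$ is a vector field with $\bm{x}$-independent coefficients and asserts the lemma as the standard statement that exponentiating such a field is translation by $\bm{f}(\bm{t})$. You have merely filled in the routine details (commutativity of the summands, the formal Taylor expansion, and the finiteness check coming from each $f_i(\bm{t})$ having no constant term), all of which are consistent with the paper's intent.
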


The Hassett potential is thus obtained from the Witten potential via a (non invertible) transformation of variables. 
While the general change of variables is rather cumbersome, if we narrow our attention to Hassett spaces of fixed diagonal weights, it may be made very explicit. We first define these restricted potentials.

\begin{definition}\label{def:qhpot}
For $q$ a fixed natural number, the \emphbf{$q$-diagonal Hassett potential} $\calD_q(\lambda;\bm{t})$ is the Hassett potential restricted to weight data of the form $\A = (\frac{1}{q},\ldots, \frac{1}{q})$. Equivalently, $\calD_q(\lambda;\bm{t})$ is the Hassett potential with all variables not of the form $t_{\frac{1}{q},k}$ set equal to zero.
\end{definition}

\begin{remark}
We note that for any rational number $a \in (0,1]$, the weight data $\calA = (a, \ldots, a)$ belongs to the same Hassett chamber as $(\frac{1}{q}, \ldots \frac{1}{q})$, where $q = \left\lfloor \frac{1}{a} \right\rfloor$. Hence the  functions $\calD_q$ are describing potentials for all possible choices of diagonal weights.
\end{remark}


For the sake of clarity, we restate some earlier results in the case of the diagonal potentials. Proofs of these statements are essentially identical to their earlier forms with appropriate notation changes; throughout, we assume a fixed $q\in\mathbb{N}$. First, a modification of Definition \ref{def:fork}.

\begin{definition}\label{def:forkdiag}
Let $\vardel_x := \frac{\vardel}{\vardel x}$. The \emphbf{$q$-fork operator} is
\begin{align}
\mathcal{L}_q &:= \sum_{n=1}^q \frac{(-1)^{n-1}}{n!}\sum_{i_1,\dots,i_n} t_{\frac{1}{q},i_1}\cdots t_{\frac{1}{q},i_n} \vardel_{x_{i_1+\cdots +i_n - (n-1)}}.
\end{align}
\end{definition}

Next, a diagonal version of Theorem \ref{thm:wittentohassett}.

\begin{theorem}\label{thm:wittentodiag}
Let $\mathcal{D}_q(\lambda;\bm{t})$, $\calF(\lambda;\bm{x})$, and $\calL_q$ be as above. Then
\begin{align} \label{eq:expopdiag}
e^{\calL_q}\calF(\lambda;\bm{x})\big\vert_{\bm{x} = \bm{0}} = \mathcal{D}_q(\lambda;\bm{t}) + U_q,
\end{align}
where $U_q$ is a collection of terms coming from ``unstable monomials": monomials of degree $n$ where $2g-2+ \frac{n}{q} \leq 0$. \hfill $\square$
\end{theorem}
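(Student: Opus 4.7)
The plan is to derive Theorem \ref{thm:wittentodiag} as a specialization of Theorem \ref{thm:wittentohassett} by restricting all variables to the diagonal locus. Consider the substitution homomorphism $\rho_q$ on the polynomial ring $\mathbb{Q}[\bm{t}]$ defined by
\begin{align*}
\rho_q(t_{a,k}) = \begin{cases} t_{\frac{1}{q},k} & \text{if } a = \frac{1}{q}, \\ 0 & \text{otherwise.} \end{cases}
\end{align*}
First I would verify three compatibilities with $\rho_q$. Writing $\calH = \sum_{\underline\A,\bm{b}} \frac{\bm{t^b}_{\underline\A}}{\bm{b}!}\langle\bm{\tau^b}_{\underline\A}\rangle_g$, only terms with $\underline\A \subseteq \{1/q\}$ survive $\rho_q$, so $\rho_q(\calH) = \calD_q$. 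Analogously, $\rho_q(U) = U_q$, since the constraint $2g-2+\sum a_i \leq 0$ specializes to $2g-2+n/q \leq 0$ on $n$-fold monomials in the variables $t_{1/q,k}$. Finally, apply $\rho_q$ to the fork operator $\calL$: in its definition the inner sum ranges over tuples with $\sum a_j \leq 1$, and after specialization the surviving tuples are those with $a_1 = \cdots = a_n = 1/q$, forcing $n \leq q$. Thus $\rho_q(\calL) = \calL_q$.

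Next, I would observe that $\rho_q$ acts trivially on the target variables $\bm{x}$ and commutes with $\vardel_{x_i}$; therefore it commutes with the formal exponential $e^{\calL}$ in the sense that $\rho_q(e^{\calL}\calF(\lambda;\bm{x})) = e^{\calL_q}\calF(\lambda;\bm{x})$. Applying $\rho_q$ to the identity of Theorem \ref{thm:wittentohassett},
\begin{align*}
e^{\calL}\calF(\lambda;\bm{x})\big\vert_{\bm{x}=\bm{0}} = \calH(\lambda;\bm{t}) + U,
\end{align*}
then yields
\begin{align*}
e^{\calL_q}\calF(\lambda;\bm{x})\big\vert_{\bm{x}=\bm{0}} = \calD_q(\lambda;\bm{t}) + U_q,
\end{align*}
as desired.

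The main thing to be careful about is that the substitution $\rho_q$ genuinely commutes with the infinite-sum operations involved; because we are working in a ring of formal power series and the monomial-by-monomial calculation underlying Theorem \ref{thm:wittentohassett} is finite in each degree, there is no convergence issue, but one should check that setting a variable to zero before or after applying $e^{\calL}$ produces the same coefficient for each fixed monomial in the remaining variables. This reduces to the observation that any monomial in $\calL^n \calF$ that contains a factor $t_{a,k}$ with $a \neq 1/q$ necessarily comes from a tuple with at least one such entry, and these monomials are precisely those killed by $\rho_q$. Alternatively, one could re-run the proof of Theorem \ref{thm:wittentohassett} from Appendix \ref{proofofwth} verbatim in the diagonal setting, invoking the diagonal specialization of Theorem \ref{thm:hassettcycle} (i.e., Corollary \ref{cor:numerical} with $\calA = (\tfrac{1}{q})^n$) to match coefficients monomial by monomial; the combinatorics is identical, with $\mathfrak{P}_{\calA}$ now being the set of partitions whose every block has size at most $q$.
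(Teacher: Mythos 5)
Your proposal is correct. The paper does not write out a separate proof of Theorem \ref{thm:wittentodiag}; it simply declares that the proofs of the diagonal statements ``are essentially identical to their earlier forms with appropriate notation changes,'' i.e.\ the intended argument is to re-run the proof of Theorem \ref{thm:wittentohassett} (your stated fallback) with $\calA = (\tfrac{1}{q},\dots,\tfrac{1}{q})$. Your primary route is slightly different and arguably cleaner: rather than repeating the coefficient-matching argument, you deduce the diagonal identity formally from the already-established general one by applying the substitution homomorphism $\rho_q$ that kills every $t_{a,k}$ with $a \neq \tfrac{1}{q}$. The three compatibilities you check are exactly the right ones — $\rho_q(\calH) = \calD_q$ is Definition \ref{def:qhpot} verbatim, $\rho_q(\calL) = \calL_q$ because the constraint $\sum a_j \leq 1$ becomes $n/q \leq 1$ and truncates the outer sum at $n = q$ as in Definition \ref{def:forkdiag}, and $\rho_q(U) = U_q$ for the same reason — and since $\rho_q$ fixes the $\bm{x}$ variables and each coefficient of $e^{\calL}\calF$ is a finite sum, the commutation with the exponential is unproblematic. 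What your approach buys is that the diagonal theorem becomes a genuine corollary of Theorem \ref{thm:wittentohassett} with no combinatorics to redo; what the paper's (implicit) approach buys is nothing extra here, so your version is the one I would recommend writing down.
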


Finally, the exponential flow for diagonal potentials, mirroring Lemma \ref{lem:expflow}.

\begin{lemma}\label{lem:expflowdiag}
Rewrite the $q$-fork operator as
\begin{align} \label{eq:altoperatordiag}
\mathcal{L}_q &= \sum_{i=0}^\infty g_i(\bm{t})\; \vardel x_{i} = \bm{g}(\bm{t})\; \vardel\bm{x}.
\end{align}
Then
\begin{align} \label{eq:expflowdiag}
\calD_q(\lambda;\bm{t}) + U_q &= e^{\mathcal{L}}\calF(\lambda;\bm{x})\big\vert_{\bm{x} = \bm{0}} \nonumber \\ \nonumber
&= \calF(\lambda;\bm{x} + \bm{g}(\bm{t}))\big\vert_{\bm{x} = \bm{0}} \\
&= \calF(\lambda; \bm{g}(\bm{t})).
\end{align}
\hfill $\square$
\end{lemma}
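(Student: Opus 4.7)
The plan is to mirror the proof of Lemma \ref{lem:expflow} in the diagonal setting, exploiting the fact that $\mathcal{L}_q$ is a vector field whose coefficient functions $g_i(\bm{t})$ depend only on $\bm{t}$ and not on $\bm{x}$. This means that any two component fields $g_i(\bm{t})\partial_{x_i}$ and $g_j(\bm{t})\partial_{x_j}$ commute: for $i\neq j$ the cross terms in the commutator involve $\partial_{x_i}(g_j)$ or $\partial_{x_j}(g_i)$, both of which vanish since $g_i,g_j$ are independent of $\bm{x}$. Hence the exponential splits as
\[
e^{\mathcal{L}_q} = \prod_{i\geq 0} e^{g_i(\bm{t})\partial_{x_i}}.
\]

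Next I would invoke the elementary identity that the exponential of a translation field with $x_i$-independent coefficient acts as a Taylor shift: $e^{c\,\partial_{x_i}}F(x_i) = F(x_i+c)$ for any formal power series $F$ and any $c$ independent of $x_i$. Applying this factor by factor yields
\[
e^{\mathcal{L}_q}\calF(\lambda;\bm{x}) = \calF\bigl(\lambda;\bm{x}+\bm{g}(\bm{t})\bigr),
\]
and setting $\bm{x}=\bm{0}$ produces $\calF(\lambda;\bm{g}(\bm{t}))$. Combining this with Theorem \ref{thm:wittentodiag}, which identifies $e^{\mathcal{L}_q}\calF(\lambda;\bm{x})\big|_{\bm{x}=\bm{0}}$ with $\mathcal{D}_q(\lambda;\bm{t})+U_q$, closes the chain of equalities in \eqref{eq:expflowdiag}.

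The only subtlety to check is that these formal manipulations are well-defined. By inspection of Definition \ref{def:forkdiag}, each $g_i(\bm{t})$ has no constant term, since every monomial appearing in it is a product of at least one variable $t_{\frac{1}{q},j}$. Consequently the substitution $x_i\mapsto x_i+g_i(\bm{t})$ is meaningful as an operation on formal power series in $(\lambda,\bm{t},\bm{x})$: the coefficient of any fixed monomial on either side of \eqref{eq:expflowdiag} depends on only finitely many terms in the relevant expansions. No analytic convergence issues arise beyond this bookkeeping, and the argument is essentially the specialization of Lemma \ref{lem:expflow} to diagonal weights, so no additional geometric input is required.
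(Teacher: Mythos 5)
Your proposal is correct and follows essentially the same route as the paper, which simply observes that $\mathcal{L}_q$ is a vector field with $\bm{x}$-independent coefficients so that $e^{\mathcal{L}_q}$ acts as the translation $\bm{x}\mapsto\bm{x}+\bm{g}(\bm{t})$, then invokes Theorem \ref{thm:wittentodiag} for the first equality. Your additional remarks on the commutativity of the component fields and on the absence of constant terms in the $g_i(\bm{t})$ are correct and merely make explicit the bookkeeping the paper leaves implicit.
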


We now describe the change of variables $\bm{x}\mapsto \bm{g}(\bm{t})$.

\begin{corollary} \label{cor:changeofvariablesdiag}
The potential $\calD_q(\lambda;\bm{t})$ is obtained from $\calF(\lambda;\bm{x})$ via the change of variables given by the following equality of generating functions:
\begin{align}\label{eq:changeofvariablesdiag}
\sum_{i=0}^\infty x_{i}z^i = \left[z\left(\sum_{n = 1}^q \frac{1}{n!}\left(\sum_{k=0}^\infty -t_{k}z^{k-1} \right)^n \right)\right]_+,
\end{align}
 where we have denoted $t_{k}:= t_{\frac{1}{q},k}$ and  the subscript $+$ denotes the truncation of the expression in brackets to terms with non-negative exponents of  $z$.
\end{corollary}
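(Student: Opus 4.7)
The plan is to verify \eqref{eq:changeofvariablesdiag} by a direct coefficient-by-coefficient match of the two formal power series in $z$. By Lemma~\ref{lem:expflowdiag} the change of variables sends $x_i$ to $g_i(\bm t)$, where $g_i(\bm t)$ is the coefficient of $\vardel_{x_i}$ when $\calL_q$ is written in the form of \eqref{eq:altoperatordiag}. The whole argument therefore reduces to computing $G(z) := \sum_{i\geq 0} g_i(\bm t)\,z^i$ in closed form and recognizing the answer as the bracketed expression on the right-hand side of \eqref{eq:changeofvariablesdiag}.

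The first step is to extract $g_i(\bm t)$ from Definition~\ref{def:forkdiag}. An $n$-tuple $(i_1,\dots,i_n)$ contributes to $\vardel_{x_i}$ exactly when $i_1+\cdots+i_n-(n-1)=i$, so with the abbreviation $t_k := t_{1/q,k}$ one reads off
\begin{equation*}
g_i(\bm t) \;=\; \sum_{n=1}^{q}\frac{(-1)^{n-1}}{n!}\sum_{\substack{i_1,\dots,i_n\geq 0\\ i_1+\cdots+i_n = i+n-1}} t_{i_1}\cdots t_{i_n}.
\end{equation*}
The second step is to sum against $z^i$, swap the orders of summation so the sum over $n$ is outermost, and recognize the now-unconstrained inner sum as a formal $n$-th power. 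The key identity, obtained by factoring a $z^{-(n-1)}$ out of each monomial, is
\begin{equation*}
\sum_{i_1,\dots,i_n\geq 0} t_{i_1}\cdots t_{i_n}\,z^{i_1+\cdots+i_n-(n-1)} \;=\; z\,\biggl(\sum_{k\geq 0} t_k z^{k-1}\biggr)^{\!n}.
\end{equation*}
The original constraint $i\geq 0$ corresponds precisely to discarding negative powers of $z$ from this Laurent expansion, which is exactly what the symbol $[\,\cdot\,]_+$ accomplishes in the statement. Finally, the sign $(-1)^{n-1}$ is absorbed into the $n$-th power by the elementary identity $(\sum_k -t_k z^{k-1})^n = (-1)^n(\sum_k t_k z^{k-1})^n$, which reconciles the prefactors of the two sides up to the sign convention chosen in the statement. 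The truncation $n\leq q$ of the outer sum is built into $\calL_q$ itself: it records the condition $\sum a_j\leq 1$ of $\calA$-total instability for diagonal weight $1/q$, so no further $n$ contributes.

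The main obstacle is purely combinatorial bookkeeping — carrying the index shift $i\leftrightarrow i_1+\cdots+i_n-(n-1)$ cleanly through a swap of formal sums, and tracking where the sign and the truncation live so that the packaging matches the stated closed form. There is no conceptual difficulty beyond the identification carried out in Lemma~\ref{lem:expflowdiag}; once $g_i(\bm t)$ is written explicitly, everything else is formal manipulation of power series.
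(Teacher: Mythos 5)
Your argument is correct and follows essentially the same route as the paper: the paper's proof simply lists the low-order coefficients $g_0,\dots,g_3$ of $\calL_q$ and calls the repackaging into \eqref{eq:changeofvariablesdiag} a ``simple combinatorial exercise,'' whereas you carry out that exercise in full, which is if anything more complete.

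The one point you should not leave as ``up to the sign convention chosen in the statement'' is the global sign, because it does not actually reconcile: absorbing $(-1)^n$ into the $n$-th power of $\sum_k -t_k z^{k-1}$ leaves over a factor $(-1)^{n-1}\cdot(-1)^{-n}=-1$ for every $n$, so your (correct) computation yields
$\sum_i x_i z^i = \sum_{n=1}^q \frac{(-1)^{n-1}}{n!}\bigl[z\bigl(\sum_k t_k z^{k-1}\bigr)^n\bigr]_+$, which is the \emph{negative} of the right-hand side of \eqref{eq:changeofvariablesdiag} as printed. Your answer agrees with the explicit expansions in the paper's own proof (e.g.\ $g_0 = t_0 - t_0t_1+\cdots$, whereas the printed generating function produces $-t_0+t_0t_1-\cdots$ in degree zero), so what your bookkeeping has actually uncovered is an overall sign typo in the displayed formula. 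State this explicitly rather than hedging; otherwise the proof as written claims to establish an identity that is false as printed.
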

\begin{proof}
Carefully examining the coefficients of $\mathcal{L}_q$ when written as in \eqref{eq:altoperatordiag}, one obtains
\begin{align*}
x_{0} = g_0(\mathbf{t}) &= t_0 -t_0t_1 +\frac{t_0^2}{2!}t_2 +t_0\frac{t_1^2}{2!} -\frac{t_0^3}{3!}t_3 - \frac{t_0^2}{2!}t_1 t_2 - t_0\frac{t_1^3}{3!} + \cdots \\
x_{1} = g_1(\mathbf{t}) &= t_1  -{t_0}t_2 - \frac{t_1^2}{2!} +\frac{t_0^2}{2!}t_3 +{t_0}t_1 t_2 + \frac{t_1^3}{3!} - \frac{t_0^3}{3!}t_4- \cdots \\
x_{2} = g_2(\mathbf{t}) &= t_2 - {t_0}t_3 -t_1 t_2  + \frac{t_0^2}{2!}t_4+ t_0 t_1t_3 + t_0\frac{t_2^2}{2!}+\frac{t_1^2}{2!}t_2-  \cdots \\
x_{3} = g_3(\mathbf{t}) &= t_3   - {t_0}t_4-  t_1t_3 - \frac{t_2^2}{2!} + \frac{t_0^2}{2!}t_5 + t_0t_1t_4+ t_0 t_2t_3+ \frac{t_1^2}{2!}t_3+t_1 \frac{t_2^2}{2!}-  \cdots \\
\end{align*}
These sums are finite: only monomials of total degree less than or equal to $q$ appear. It is a simple combinatorial exercise to see that the above change of variables is organized in generating function form as in \eqref{eq:changeofvariablesdiag}.
\end{proof}

\begin{remark}
Corollary \ref{cor:changeofvariablesdiag} gives a complete picture of the wall-crossings for diagonal Hassett potentials. We remark that the case $q = 2$ was studied in \cite{nand} and the asymptotic case $q\to +\infty$ appears in \cite{bckappa}.
\end{remark}

\section{Cycle-Valued Potentials}\label{sec:cyclegen}

In this section we generalize Theorem \ref{thm:wittentohassett} to the level of cycles, showing that the potential for all monomials of $\psi$ classes on all Hassett spaces can be related to a potential for cycles on Deligne-Mumford spaces $\overline{\mathcal{M}}_{g,n}$ obtained by decorating pinwheel strata with $\psi$ classes.

\begin{definition}\label{def:altwitten}
Let $\mathbb{H}$ be a countably dimensional vector space, with  coordinates $x_0, x_1, x_2, \ldots$. These variables can be graded in two different, meaningful ways: we say that $x_k$ has numerical degree equal to one and Chow degree equal to $k$. Then the \emphbf{genus $g$ cycle Witten potential} is the function
\begin{align}
{\cycF}_g:\mathbb{H} \to \prod_{g,n} A^*(\Moduli_{g,n})/S_n
\end{align}
defined by
\begin{align}\label{cgwp}
\bm{x}=(x_0,x_1,\dots) \mapsto \exp\left(\sum_{k=0}^\infty x_k\psi^k\right),
\end{align}
where \eqref{cgwp} is shorthand notation to denote the fact that the coefficient of a monomial of numerical degree $n$ is a cycle on $\Moduli_{g,n}$. The Chow degree of the monomial equals the Chow degree of the cycle. The natural action of the symmetric group $S_n$ on $\Moduli_{g,n}$ permuting the points induces an action on the Chow ring: the cycle potential takes value in the quotient via such action, which means that we do not keep track of the actual labels of the marked points, but only of the number of $\psi$ classes raised to any given power.
As usual we denote the \emphbf{cycle Witten potential} by summing over genus
\begin{align}
{\cycF}:= \sum_{g=0}^{\infty} \lambda^{2g-2} {\cycF}_g.
\end{align}
\end{definition}

We now generalize the cycle Witten potential in two ways, first to encode cycles supported on pinwheel strata, then to encode monomials of $\psi$ classes pulled-back from Hassett spaces.

\begin{definition}\label{def:cycpin}
Let $\mathbb{V}$ be a countably dimensional vector space, with  coordinates $x_{n,k}$, where $n\geq 1$ and $k\geq 0$. The variable $x_{n,k}$ has numerical degree equal to $n$ and Chow degree equal to $k$. 
For $g\geq 1$ and $n\geq 2$, define the gluing morphism
\begin{align}
gl^{n}:\Moduli_{g,\{\bullet\}}\times \Moduli_{0,n\cup\{\star\}}\to\Moduli_{g,n}
\end{align}
which glues together $\bullet$ and $\star$. 
Then the \emphbf{genus $g$ pinwheel potential} is the function
\begin{align}
{\cycQ}_g:\mathbb{V} \to \prod_{g,n} A^*(\Moduli_{g,n})/S_n
\end{align}
defined by
\begin{align}\label{cpp}
\bm{x}=(x_{n,k}) \mapsto \exp\left(\sum_{k=0}^\infty x_{1,k}\psi^i- \sum_{n=2}^\infty x_{n,k} gl_*^{n}\left(\frac{\psi_{\bullet}^{k-1} - (- \psi_{\star})^{k-1}}{\psi_\bullet + \psi_{\star}}\right)\right),
\end{align}
where \eqref{cpp} is shorthand notation to denote the fact that the coefficient of a monomial of numerical degree $n$ is a cycle on $\Moduli_{g,n}$, via the natural generalization of gluing morphisms so that multiple tails are attached. The cohomological degree of the monomial equals the cohomological degree of the cycle.
The \emphbf{cycle pinwheel potential}  is obtained by summing over genus in the usual fashion.
\end{definition}

\begin{definition}\label{def:cychas}
Let $\mathbb{W}$ be a countably dimensional vector space, with  coordinates $t_{a,k}$, with $a\in (0,1]\cap \mathbb{Q}$ and $k\geq 0$.  The variable $t_{a,k}$ has numerical degree equal to $1$ and cohomological degree equal to $k$. We also say that the variable $t_{a,k}$ has weight $a$.
Let $c_a: \Moduli_{g,1} \to \Moduli_{g, \{a\}}$ denote the contraction morphism to a one pointed Hassett space where the point has weight $a$.
 Then the \emphbf{genus $g$ cycle Hassett potential} is the function
\begin{align}
{\cycH}_g:\mathbb{W} \to \prod_{g,n} A^*(\Moduli_{g,n})/S_n
\end{align}
defined by
\begin{align}\label{chp}
\bm{t}=(t_{a,k}) \mapsto \exp\left(\sum_{a}\sum_{k=0}^\infty t_{a,k}c_a^\ast\psi^k\right),
\end{align}
where \eqref{chp} is shorthand notation to denote the fact that the coefficient of a monomial of numerical degree $n$ is a cycle on $\Moduli_{g,n}$, pulled-back via an appropriate contraction morphism $c_{\calA}$ determined by the weights of the variables in the monomial. The cohomological degree of the monomial equals the cohomological degree of the cycle.
The \emphbf{cycle Hassett potential} is obtained by summing over genus.
\end{definition}

With these definitions in place the shape of the fork operator and the statement of Theorem \ref{thm:wittentohassett} for cycle potentials are natural.

\begin{definition}\label{def:cfork} The \emphbf{cycle fork operator} is
\begin{align}
{\cycL} &:= \sum_{n=1}^\infty \frac{1}{n!}\sum_{\substack{(a_1,k_1),\dots,(a_n,k_n) \\ \sum a_j \leq 1}} t_{a_1,k_1}\cdots t_{a_n,k_n} \vardel_{x_{n, k_1+\cdots +k_n}}.
\end{align}
\end{definition}

\begin{theorem}\label{thm:wittentohassettcyc}
Let $\mathcal{H}(\lambda;\bm{t})$, $\cycQ(\lambda;\bm{x})$, and $\cycL$ be as above. Then
\begin{align} \label{eq:expopc}
e^{\cycL}\cycQ(\lambda;\bm{x})\big\vert_{\bm{x} = \bm{0}} ={\cycH}(\lambda;\bm{t}) + U,
\end{align}
where $U$ is supported on monomials $\prod_i t_{a_i,k_i}$  with $2g-2+\sum a_i \leq 0$.
\end{theorem}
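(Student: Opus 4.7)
The plan is to follow the strategy of Theorem \ref{thm:wittentohassett}, lifted from intersection numbers to cycles: both sides of \eqref{eq:expopc} are formal power series in $\bm{t}$ with coefficients in $A^*(\Moduli_{g,n})/S_n$, and the task is to match these coefficients term by term, using Theorem \ref{thm:hassettcycle} as the key geometric input.

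First I would observe that $\cycL$ is a vector field whose coefficients depend only on $\bm{t}$, so its exponential acts on any formal function of $\bm{x}$ as translation $\bm{x}\mapsto \bm{x}+\bm{f}(\bm{t})$, where $\bm{f}$ collects the coefficients of $\vardel_{x_{n,k}}$:
\begin{align*}
f_{n,k}(\bm{t}) = \frac{1}{n!}\sum_{\substack{(a_1,k_1),\dots,(a_n,k_n) \\ \sum a_j \leq 1,\; \sum k_j = k}} t_{a_1,k_1}\cdots t_{a_n,k_n}.
\end{align*}
Evaluating at $\bm{x}=\bm{0}$ then turns the left-hand side of \eqref{eq:expopc} into $\cycQ(\lambda;\bm{f}(\bm{t}))$. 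Because $\cycQ$ is itself an exponential, this substitution rewrites it as $\exp(E(\bm{t}))$, where $E$ is a sum over pairs $(n,k)$ of $f_{n,k}(\bm{t})$ times the associated decorated pinwheel cycle: a bare $\psi^k$ for $n=1$, and a pushforward of a polynomial in $\psi_\bullet,\psi_\star$ via $gl^n_*$ for $n\geq 2$.

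Next, I would compute the coefficient of a generic monomial $\frac{1}{\bm{b}!}\prod_i t_{a_i,k_i}$ on each side and compare. On the right, by the definition of $\cycH$ this coefficient is the class $c^*_{\calA}\prod_i \psi_{i,\calA}^{k_i}$ whenever $2g-2+\sum a_i > 0$, and is collected into $U$ otherwise. Theorem \ref{thm:hassettcycle} expands such a class as a sum over $\calA$-totally unstable partitions $\mathcal{P}$ of decorated pinwheel strata. On the left, expanding $\exp(E(\bm{t}))$ produces a sum indexed by ordered tuples of $(n,k)$ pairs; regrouping by the induced partition of the marks into ``blocks,'' and observing that the constraint $\sum a_j \leq 1$ inside each $f_{n,k}$ is precisely the $\calA$-total instability of the corresponding block, one recovers the right-hand sum. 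Blocks of size one (coming from the $n=1$ term of $\cycL$) contribute the singleton factor $\psi_{\bullet_j}^{\alpha_j}$, while blocks of size $n\geq 2$ contribute the pinwheel decoration $gl^n_*\bigl((\psi_{\bullet_j}^{\alpha_j} - (-\psi_{\star_j})^{\alpha_j})/(-\psi_{\bullet_j}-\psi_{\star_j})\bigr)$, exactly matching Theorem \ref{thm:hassettcycle}.

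The main obstacle will be the careful combinatorial matching of symmetry factors on the two sides. The $1/n!$ inside $\cycL$ cancels the overcounting of ordered versus unordered tuples within a single block; the $1/r!$ arising from the $r$-fold product in the Taylor expansion of $\exp(E)$ accounts for the choice of ordering of the $r$ blocks themselves; and the $1/\bm{b}!$ on the right-hand side absorbs repetitions of identical variables $t_{a,k}$. This is the cycle-valued analogue of the bookkeeping deferred to Appendix \ref{proofofwth} for the numerical version of Theorem \ref{thm:wittentohassett}, and proceeds along identical lines, with the monomials of $\psi$-classes replaced throughout by the corresponding decorated pinwheel cycles. A final check confirms that every monomial for which $2g-2+\sum a_i \leq 0$ is swept into $U$, since such data correspond to empty Hassett spaces and thus do not appear in $\cycH$.
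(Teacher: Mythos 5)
Your proposal is correct and follows essentially the same route as the paper: the paper's own proof is precisely a coefficient-by-coefficient comparison of the two sides of \eqref{eq:expopc}, with the equality of coefficients being exactly the statement of Theorem \ref{thm:hassettcycle}. Your additional detail — interpreting $e^{\cycL}$ as the translation $\bm{x}\mapsto\bm{x}+\bm{f}(\bm{t})$ and tracking the $1/n!$, $1/r!$, and $1/\bm{b}!$ symmetry factors — is the same bookkeeping the paper defers to (and carries out in) the appendix for the numerical analogue, Theorem \ref{thm:wittentohassett}.
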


\begin{proof}
The proof consists of comparing the coefficients of each monomial on both sides of \eqref{eq:expopc} and observing that their equality is exactly the statement of Theorem \ref{thm:hassettcycle}. 
\end{proof}

\appendix
\section{Proofs of technical theorems}
\subsection{Theorem \ref{thm:hassettcycle}} \label{hacyth}
\begin{proof}
We adopt the convention $\psi_{\star_j} = 0$ when $|P_j| = 1$. The proof consists of an induction on $n$ and the total power $K =\sum_{i=1}^n k_i$. 
We proceed by induction on $(n,K)$ in lexical order: we assume the formula holds for all pairs $(n, K)$ with $n< n_0$ or $n=n_0$ and $K\leq K_0$, and prove it for $(n_0, K_0+1)$.
 
We begin with  the base case $K=1$ for every $n$. On the left-hand side of \eqref{eq:hassettfor}, we have, without loss of generality, $c^*_{\A}\psi_{1,\calA}$. On the right-hand side, the  pinwheel stratum  corresponding to the singleton partition $\calP = \{\{1\},\{2\},\dots,\{n\}\}$ is $\Moduli_{g,n}$; it appears in \eqref{eq:hassettfor} with coefficient  $\psi_1$. 
As seen in Remark \ref{rem:ratfun}, non-zero contributions only come from strata where each part of size greater than one has a point with non-zero $k_1$. In this case, this only leaves partitions with exactly one part $P_j$ of size greater than one; further, it must be that $1\in P_j$ and $\calP \in \mathfrak{P}_{\A}$. We have $[\Delta_\calP] = D([n]\smallsetminus P_j\,|\,P_j)$, $\alpha_j = 1$, and all other $\alpha$ equal to zero.  The coefficient from $P_j$ is
\begin{align*}
\frac{\psi_{\bullet_j}-(-\psi_{\star_j})}{-\psi_{\bullet_j}-\psi_{\star_j}} = -1.
\end{align*}
All other parts are singletons with $\alpha=0$, and hence  each contributes $\psi_{\bullet}^0 = 1$ to the product. 
Thus equation \eqref{eq:hassettfor} becomes 
\begin{align}
c^*_{\A}\psi_{1,\calA} 
&= \psi_1 - \sum_{\substack{1\in B \\ D(A\,|\,B)\in\mathfrak{P}_{\A}}} D(A\,|\,B),
\end{align}
which we have seen in Lemma \ref{lem:pull-back}. Thus the base case is established.

Assume \eqref{eq:hassettfor} holds for total monomial power $K \leq m$ for some $m\in\mathbb{N}$ and for all spaces with fewer than $n$ marked points. We hold $n$ fixed and increase $K$ by 1 by multiplying, again without loss of generality, by $c^*_{\A}\psi_{1,\calA}$. We have
\begin{align} \label{for:ind}
c^*_{\A}\left(\prod_{i=1}^n \psi_{i,\calA}^{k_i}\right)\cdot c^*_{\A}\psi_{1,\calA} &=  c^*_{\A} \left(\prod_{i=1}^n \psi_{i,\calA}^{k_i}\right) \left(\psi_1 - \sum_{\substack{1\in B \\ D(A\,|\,B) \in \mathfrak{P}_{\A}}} D(A|B) \right) \nonumber \\
&= c^*_{\A}\left(\prod_{i=1}^n \psi_{i,\calA}^{k_i}\right)\cdot \psi_1 - \sum_{\substack{1\in B \\ D(A\,|\,B)\in \mathfrak{P}_{\A}}} c^*_{\A}\left(\prod_{i=1}^n \psi_{i,\calA}^{k_i}\right)  D(A\,|\,B). 
\end{align}
We may  assume $1\in P_1$. We examine each of the summands on the right-hand side of \eqref{for:ind}. For the first term, by inductive hypothesis we have
\begin{align}
c^*_{\A}\left(\prod_{i=1}^n \psi_{i,\calA}^{k_i}\right)\cdot\psi_1 &= \left(\sum_{\calP \in \mathfrak{P}_{\A}} [\Delta_{\calP}] \prod_{|P_j|=1} \psi_{\bullet_j}^{\alpha_j} \prod_{|P_j|>1} \decoratenew \right)\cdot \psi_1 \nonumber \\
&= \sum_{|P_1|=1} [\Delta_{\calP}] \prod_{\substack{|P_j|=1 \\ j\neq 1}} \psi_{\bullet_j}^{\alpha_j} \prod_{|P_j|>1} \decoratenew \cdot \psi_1^{k_1+1}  \nonumber \\
&\hspace{1cm} + \sum_{|P_1|>1} \left([\Delta_{\calP}]\cdot \psi_1\right)\prod_{|P_j|=1} \psi_{\bullet_j}^{\alpha_j} \prod_{|P_j|>1} \decoratenew. \label{eq:lhs}
\end{align}
Note  that for the $|P_1|=1$ cases, $\{\bullet_1\} = \{1\}$ by the  convention adopted in defining $[\Delta_{\calP}]$.

We now turn to the second summand in \eqref{for:ind}. We rename $B = P_1$ to emphasize that  the point $1$  belongs to this subset; now $A = [n]\backslash P_1 = \{i_1,\dots,i_\ell\}$ (which may be empty) 
and note that  summing over all divisors $D(A\,|\,B)$ with $1\in B$ is equivalent to summing over all 
$|P_1|>1$ by the stability requirement on rational components. We denote $gl_{P_1}: \Moduli_{g, A\cup \{L_1\}} \times \Moduli_{0,P_1\cup\{R_1\}} \to \Moduli_{g,n}$ the gluing morphism whose image is $D(A\,|\,P_1)$. Let $s = \min\left(1,\sum_{i\in P_1} a_i\right)$; then we have:
\begin{align}
\hspace{-1cm}\sum_{\substack{1\in B \\ D(A\,|\,B)\in\mathfrak{P}_{\A}}} c^*_{\A}\left(\prod_{i=1}^n \psi_{i,\calA}^{k_i}\right)  & D(A\,|\,B) = \sum_{|P_1|>1} c^*_{\A}\left(\prod_{i=1}^n \psi_{i,\calA}^{k_i}\right)  D(A\,|\,P_1) \nonumber \\
&\stackrel{Lemma\ \ref{lem:falldownhassett}}{=} \sum_{|P_1|>1} {gl_{P_1}}_*\left[c^*_{\calA}\left(\prod_{i\in A}\psi_{i,\calA}^{k_i}\right) \cdot c^*_{\calB}\left(\psi_{L_1,\calB}^{\sum_{j\in P_1} k_j} \right)\right]  \nonumber\\
&= \sum_{|P_1|>1} {gl_{P_1}}_*\left(\sum_{\calQ\in\mathfrak{P}_{\calB}} [\Delta_{\calQ}] \prod_{|Q_j|=1} \psi_{\bullet_j}^{\alpha_j} \prod_{|Q_j|>1} \decoratenew  \right)  \label{eq:dc}
\end{align}
where $\calB = (a_{i_1},\dots,a_{i_{\ell}},a_{L_1} = s)$.


The last equality follows from induction with respect to the number of marks. Adopting the convention that $L_1 \in Q_1$, we note that $\alpha_1 = \sum_{i\in Q_1\cup P_1}k_i$. We now group the partitions $\calQ$ in two groups: the first is where $Q_1$ is the singleton $\{L_1\}$: in this case ${gl_{P_1}}_\ast ([\Delta_\calQ])$ is the class of the pinwheel stratum $\Delta_\calP$, where $\calP = P_1 \cup \calQ \smallsetminus Q_1$. The second group contains all partitions $\calQ$ with $|Q_1|>1$. See Figure \ref{fig-twogps} for a pictorial description. Then \eqref{eq:dc} continues:

\begin{align}
&= \sum_{|P_1|>1} [\Delta_{\calP}] \prod_{|P_j| = 1} \psi_{\bullet_j}^{\alpha_j} \prod_{\substack{|P_j|>1 \\ j\neq 1}} \decoratenew \cdot \psi_{\bullet_1}^{\alpha_1}  \nonumber \\
&\hspace{1cm} + \sum_{|P_1|>1}\sum_{|Q_1|>1} {gl_{P_1}}_*\left( [\Delta_{\calQ}] \cdot \decorateonenew \prod_{|Q_j|=1} \psi_{\bullet_j}^{\alpha_j} \prod_{\substack{|Q_j|>1 \\ j\neq 1}} \decoratenew  \right) \nonumber \\ 
&= \sum_{|P_1|>1} [\Delta_{\calP}] \prod_{|P_j| = 1} \psi_{\bullet_j}^{\alpha_j} \prod_{\substack{|P_j|>1 \\ j\neq 1}} \decoratenew \cdot \psi_{\bullet_1}^{\alpha_1}  \nonumber \\
&\hspace{1cm} + \sum_{|P_1|>1} [\Delta_{\calP}] \prod_{|P_j|=1} \psi_{\bullet_j}^{\alpha_j} \prod_{|P_j|>1} \decoratenew \cdot (\psi_1+\psi_{\star_1}) 
\label{eq:rhs}
\end{align}

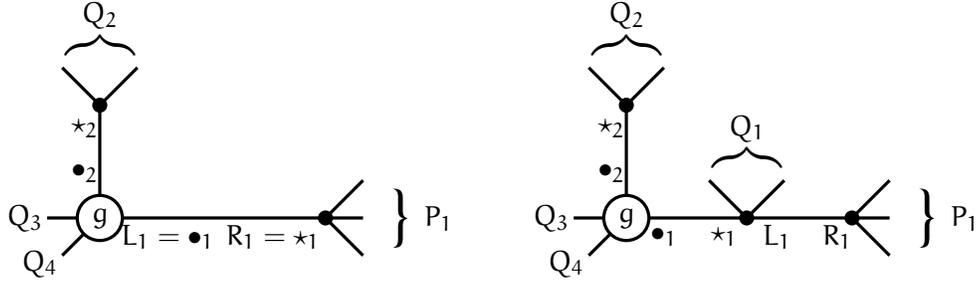
\begin{figure}[bt]

\begin{tikzpicture}
\draw[very thick]  (0,0) circle (0.30);
\node at (0,0) {$g$};

\draw[very thick] (0.30,0) -- (3,0);
\fill (3,0) circle (0.10);
\draw[very thick] (3,0) -- (3.5,0);
\draw[very thick] (3,0) -- (3.5,0.5);
\draw[very thick] (3,0) -- (3.5,-0.5);
\node at (4,0) {{\Huge{\}}}};
\node at (4.5,0) {$P_1$};
\node at (0.9,-0.25) {$L_1 = \bullet_1$};
\node at (2.3,-0.25) {$R_1= \star_1$};

\draw[very thick] (0,0.30) -- (0,1.5);
\fill (0,1.5) circle (0.10);
\draw[very thick] (0,1.5) -- (-0.5,2);
\draw[very thick] (0,1.5) -- (0.5,2);
\node at (0,2.3) {{\Large$\overbrace{}$}};
\node at (0,2.7) {$Q_2$};

\node at (-0.2, .6) {$\bullet_2$};
\node at (-0.2, 1.2) {$\star_2$};

\draw[very thick] (-0.30,0) -- (-.7,0);
\node at (-1,0) {$Q_3$};

\draw[very thick] (-0.21,-0.21) -- (-.5,-.5);
\node at (-0.8,-.6) {$Q_4$};
\draw[very thick]  (7,0) circle (0.30);
\node at (7,0) {$g$};

\draw[very thick] (7.30,0) -- (10,0);
\fill (10,0) circle (0.10);
\draw[very thick] (10,0) -- (10.5,0);
\draw[very thick] (10,0) -- (10.5,0.5);
\draw[very thick] (10,0) -- (10.5,-0.5);
\node at (11,0) {{\Huge{\}}}};
\node at (11.5,0) {$P_1$};
\node at (7.5,-0.25) {$\bullet_1$};
\node at (9,-0.25) {$L_1$};
\node at (8.3,-0.25) {$\star_1$};
\node at (9.8,-0.25) {$R_1$};

\fill (8.6,0) circle (0.10);
\draw[very thick] (8.6,0) -- (8.1,0.5);
\draw[very thick] (8.6,0) -- (9.1,0.5);
\node at (8.6,0.8) {{\Large$\overbrace{}$}};
\node at (8.6,1.2) {$Q_1$};

\draw[very thick] (7,0.30) -- (7,1.5);
\fill (7,1.5) circle (0.10);
\draw[very thick] (7,1.5) -- (6.5,2);
\draw[very thick] (7,1.5) -- (7.5,2);
\node at (7,2.3) {{\Large$\overbrace{}$}};
\node at (7,2.7) {$Q_2$};

\node at (6.8, .6) {$\bullet_2$};
\node at (6.8, 1.2) {$\star_2$};

\draw[very thick] (6.7,0) -- (6.3,0);
\node at (6,0) {$Q_3$};

\draw[very thick] (7-0.21,-0.21) -- (7-.5,-.5);
\node at (7-0.8,-.6) {$Q_4$};

\end{tikzpicture}
\caption{Examples of dual graphs corresponding to the two summands in equation \eqref{eq:rhs}. On the left-hand side we have graphs where $Q_1 = \{L_1\}$; on the right-hand side $|Q_1| >1$. }\label{fig-twogps}
\end{figure}

In the last equality we have applied Lemma \ref{lem:psisums}, and reindexed the sum so that the new $P_1$ is equal to what used to be $P_1 \cup (Q_1 \smallsetminus L_1)$.


We rewrite \eqref{for:ind} using \eqref{eq:lhs} and \eqref{eq:rhs};  the second term in the right-hand side of \eqref{eq:lhs}  cancels part of the  second term of the right-hand side of \eqref{eq:rhs}, and we obtain:

\begin{align}
\label{almostthere}
c^*_{\A}\left(\prod_{i=1}^n \psi_{i,\calA}^{k_i}\right)\cdot c^*_{\calA}\psi_{1,\calA} &= \sum_{|P_1|=1} [\Delta_{\calP}] \prod_{\substack{|P_j|=1 \\ j\neq 1}} \psi_{\bullet_j}^{\alpha_j} \prod_{|P_j|>1} \decoratenew \cdot \psi_1^{k_1+1}  \nonumber \\
&\hspace{1cm} -\sum_{|P_1|>1} [\Delta_{\calP}] \prod_{|P_j| = 1} \psi_{\bullet_j}^{\alpha_j} \prod_{\substack{|P_j|>1 \\ j\neq 1}} \decoratenew \cdot \psi_{\bullet_1}^{\alpha_1}  \nonumber \\
&\hspace{1cm} - \sum_{|P_1|>1} [\Delta_{\calP}] \prod_{|P_j|=1} \psi_{\bullet_j}^{\alpha_j} \prod_{|P_j|>1} \decoratenew \cdot \psi_{\star_1} \nonumber \\
&= \sum_{|P_1|=1} [\Delta_{\calP}] \prod_{\substack{|P_j|=1 \\ j\neq 1}} \psi_{\bullet_j}^{\alpha_j} \prod_{|P_j|>1} \decoratenew \cdot \psi_1^{k_1+1} \nonumber \\
&\hspace{1cm}+ \sum_{|P_1|>1} [\Delta_{\calP}] \prod_{|P_j| = 1} \psi_{\bullet_j}^{\alpha_j} \prod_{\substack{|P_j|>1 \\ j\neq 1}} \decoratenew \nonumber \\
&\hspace{2cm} \times \left(-\psi_{\bullet_1}^{\alpha_1}  -\psi_{\star_1}\decorateonenew \right)
\end{align}

We conclude the proof by observing that \eqref{almostthere} gives formula \eqref{eq:hassettfor}, with $k_1$ replaced by $k_1+1$ (and hence every occurrence of $\alpha_1$ replaced by $\alpha_1+1$): the first summand shows that the coefficients agree on the nose for partitions with $P_1 = \{1\}$; the second summand deals with partitions where $|P_1|>1$: the coefficients match after noting the elementary identity:
\begin{align*}
\frac{\psi_{\bullet_1}^{\alpha_1+1}-(-\psi_{\star_1})^{\alpha_1+1}}{-\psi_{\bullet_1}-\psi_{\star_1}} &= -\psi_{\bullet_1}^{\alpha_1} + \psi_{\bullet_1}^{\alpha_1-1}\psi_{\star_1} - \psi_{\bullet_1}^{\alpha_1-2}\psi_{\star_1}^2 + \cdots \\
&= -\psi_{\bullet_1}^{\alpha_1} - \psi_{\star_1}\cdot (-\psi_{\bullet_1}^{\alpha_1-1} + \psi_{\bullet_1}^{\alpha_1-2}\psi_{\star_1} - \cdots). 
\end{align*}
\end{proof}

\subsection{Theorem \ref{thm:wittentohassett}}
\label{proofofwth}

\begin{proof}
The strategy is to use \eqref{eq:numerical} and careful bookkeeping to show that the coefficients of both sides of \eqref{eq:expop} agree. We denote the multi-index
\begin{align*}
K = (1^{k_1}, 2^{k_2}, \dots, m^{k_m}) = (\underbrace{1,\dots, 1}_{k_1\text{ factors}}, \,\underbrace{2, \dots, 2}_{k_2\text{ factors}}, \dots, \underbrace{m, \dots, m}_{k_m \text{ factors}});
\end{align*}
all multi-indices arising in this proof may be assumed to be normalized so that the values are non-decreasing. 

Fix a monomial in $\mathcal{H}(\lambda;\bm{t})$
\begin{align*}
\lambda^{2g-2}\frac{t_\calA^K}{K!}:=\lambda^{2g-2}\frac{\prod t_{a,m}^{k_{a,i}}}{\prod k_{a,i}!} .
\end{align*}
The coefficient $\mathcal{I}$ of this monomial is the  intersection number described in Definition \ref{def:hassettpotential}. Formula  \eqref{eq:numerical} gives an expression for this quantity in terms of a weighted sum over all $\A$-totally unstable partitions of the index set, so let us fix a partition $\calP = \{P_1, \dots , P_r\} \in \mathfrak{P}_{\A}$. Each part $P_i$ of the partition $\calP$ produces a multi-index by looking at the powers of weighted $\psi$-classes supported on the points that belong to $P_i$.

It is possible that different parts give rise to the same multi-index. We denote $\underline{J} = (J_1^{\nu_1}, \dots, J_t^{\nu_t})$ the collection of the multi-indices arising from the parts of $\calP$, intending that the multi-index $J_i$ arises $\nu_i$ times. For  $i$ from $1$ to $t$, we denote  $J_i = (1^{j_{i,1}}, \ldots, m^{j_{i,m}})$. Finally, from $\underline{J}$, we can produce a multi-index $\alpha = (\alpha_1^{\nu_1}, \ldots,\alpha_t^{\nu_t})$, where $\alpha_i = 1 +  j_{i,2}+2j_{i,3}+\ldots+(m-1) j_{i,m}$. 
The multi-index $\alpha$ is the exponent vector of the monomial in $\psi$-classes corresponding to the partition $\calP$ in \eqref{eq:numerical}.
There are exactly
\begin{align*}
\frac{\prod k_{a,i}!}{((\prod j_{i,l}!)^{\nu_i}\prod \nu_i!)}
\end{align*}
distinct partitions of the indices that will produce a given multi-index $\alpha$. With this notation established, we can rewrite \eqref{eq:numerical} as a summation over the combinatorial data given by $\underline{J}$:
\begin{align} \label{eq:ugh}
\mathcal{I} = \sum_{\underline{J}} (-1)^{n+\ell(\alpha)} \frac{\prod k_{a,i}!}{(\prod j_{i,l}!)^{\nu_i}\prod \nu_i!}\int_{\Moduli_{g,\ell(\alpha)}}\psi^\alpha.
\end{align}
We exhibit a summand $m_\alpha$ in $\calF(\lambda;\bm{x})$, and a term $L_{\underline{J}}$ in the differential operator $e^{\calL}$ such that $L_{\underline{J}}m_\alpha$ is a multiple of $\lambda^{2g-2}\dfrac{t_\calA^{K}}{K!}$; the specific multiple may be shown to be precisely $\dfrac{\prod k_{a,i}!}{((\prod j_{i,l}!)^{\nu_i}\prod \nu_i!)}$. Define:
\begin{align}
m_\alpha & =  \left(\int_{\Moduli_{g,\ell(\alpha)}}\psi^\alpha\right) \lambda^{2g-2} \frac{x_{\alpha_1}^{\nu_1}\cdots x_{\alpha_t}^{\nu_t}}{\nu_1!\cdots \nu_t!},\\
L_{\underline{J}} & =\frac{ (-1)^{n+\ell(\alpha)} }{\prod \nu_i!} \prod \left( \frac{\prod t_{a,i}^{j_{a,i}}}{\prod j_{a,i}!} \partial{x_{\alpha_i}}\right)^{\nu_i}.
\end{align}
One may show that all pairs of terms  $m$ in $\calF(\lambda;\bm{x})$ and $L$ in $e^{\calL}$ such that $Lm$ is a multiple  of $\lambda^{2g-2}\dfrac{t_\calA^{K}}{K!}$ arise in this fashion. It follows that the coefficient of $\lambda^{2g-2}\dfrac{t_\calA^{K}}{K!}$ in $e^{\calL} \calF(\lambda;\bm{x})$ equals the right hand side of \eqref{eq:ugh}, and therefore it agrees with the coefficient of the same monomial $\calH(\lambda;\bm{t})$.
\end{proof}

\bibliographystyle{amsalpha} 
\bibliography{bibliography} 

\end{document}